\DeclareMathOperator{\co}{co}
\DeclareMathOperator{\cco}{\overline{co}}
\DeclareMathOperator{\diam}{diam}
\DeclareMathOperator{\supp}{supp}
\newtheorem{theorem}{Theorem}[section]
\newtheorem{thm}[theorem]{Theorem}
\newtheorem{lem}[theorem]{Lemma}
\newtheorem{proposition}[theorem]{Proposition}
\newtheorem{prop}[theorem]{Proposition}
\newtheorem*{thrm}{Theorem}
\theoremstyle{definition}
\newtheorem{dfn}[theorem]{Definition}
\newtheorem*{rem}{Remark}
\newtheorem*{ntt}{Notation}
\numberwithin{equation}{section}
\begin{document}
%%%%% To ease editing, for IMPAN journals add:
\baselineskip=17pt
\title{On the structure of non-dentable subsets of 
$C(\omega ^{\omega^{{k}}} )$}
\thanks{This is part of the first author's Ph.D thesis, which is in preparation at
the Technical University of Crete under the supervision of the second
author}
\author[P.D. Pavlakos]{Pericles D. Pavlakos}
\address{Technical University of Crete\\
Department of Sciences,
Section  of Mathematics,
\\
73100 Chania,Greece
}\email{pericles@science.tuc.gr}
\email{minos@science.tuc.gr}
\author[M. Petrakis]{Minos  Petrakis}
%\address{Technical University of Crete\\
%Department of Sciences,
%Section  of Mathematics,
%\\
%73100 Chania,Greece
%}

\dedicatory{Dedicated to the memory of J.J. Uhl, Jr.}
\date{}
\begin{abstract}
It is shown that there is no $K$ closed convex bounded non-dentable subset of $%
C(\omega ^{\omega ^{k}})$ such that on the subsets of $K$ the PCP
and the RNP are equivalent properties. Then applying
Schachermayer-Rosenthal theorem, we conclude that every
non-dentable $K$ contains non-dentable subset $L$ so that on $L$
the weak topology coincides with the norm one. It follows from
known results that the RNP and the KMP are equivalent properties
on the subsets of $C(\omega ^{\omega ^{k}})$.
\end{abstract}

\subjclass[2010]{46B20,46B22}

\keywords{Radon-Nikodym and Krein-Milman
properties, operators on L$^{1}$, bush, spaces C($\alpha $) with $\alpha $
countable ordinal}

\maketitle
\section*{Introduction}
The study of subsets of Banach spaces with the RNP flourished in the 70's
and 80's. For the history of the subject and results until 1977 one can see
\cite{14}. For more recent results (until 1983), in the form of a book, see \cite{13}.
Important results can be found in \cite{9}. See also \cite{5} for a concise exposition
of RNP.

Many mathematicians worked on the Radon-Nikodym property including: R. Phelps, R. C. James,
J. Diestel, J. J. Uhl, Jr., M. Talagrand, C. Stegall, J. Bourgain, H.
Rosenthal, W. Schachermayer, N. Ghoussoub, B. Maurey, G. Godefroy, S.
Argyros.

Important papers in the field are: \cite{7}, \cite{9}, \cite{10}, \cite{24}, \cite{15}, \cite{22}, \cite{19}.

Cornerstone in our considerations in this paper is the paper
\cite{2} which can be considered as a localization of the results
in \cite{7} and a unification of Bourgain's and Schachermayer's
theorems (\cite{7}, \cite{22}).

Also \cite{4} played an important role in the constructions of bushes in some of
our theorems.

According to \cite{19} the study of the structure of non-dentable sets of a
Banach space is central in the Geometry of Banach spaces.

The Diestel conjecture remains open since 1973 both globally and locally: Is
the KMP equivalent to the RNP?

The most signifigant results related to this problem are the following:

\begin{thrm}(Schachermayer, \cite{22}, Th. 2.1)):If a convex, bounded,
closed subset $D\subset X$ is strongly regular and fails to be an
RN-set, then there is a closed, bounded, convex and separable subset $C$
\textit{\ of }$D$ which does not have an extreme point.
\end{thrm}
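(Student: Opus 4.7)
The plan is to combine the two hypotheses into a bush construction. Failure of RNP allows us to extract, inside $D$, a uniform $\delta$-separation: there is $\delta>0$ and a closed bounded convex separable subset $D_0\subseteq D$ in which (after the standard ``point of non-dentability'' reduction) every slice of every convex subset we shall meet has diameter at least $\delta$. Strong regularity is preserved under passing to convex subsets, so $D_0$ is still strongly regular, meaning that for every closed convex $L\subseteq D_0$ and every $\eta>0$ there exist slices $S_1,\dots,S_n$ of $L$ with $\diam\!\left(\tfrac{1}{n}\sum_{i=1}^{n}S_i\right)<\eta$. The two properties pull in opposite directions in a useful way: individual slices are large, yet a properly weighted average of them is arbitrarily small.

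Build a countable bush $(x_s)_{s\in\mathcal{T}}$ together with closed convex sets $(C_s)_{s\in\mathcal{T}}$ in $D_0$ with $x_s\in C_s$, as follows. Start with $x_\emptyset\in D_0$ and $C_\emptyset=D_0$. Given $x_s$ and $C_s$, fix $\varepsilon_{|s|}\downarrow 0$ and apply strong regularity to $C_s$ to produce slices $S_{s,1},\dots,S_{s,n(s)}$ of $C_s$ whose average has diameter less than $\varepsilon_{|s|}$, arranged so that $x_s$ is essentially the average of one point picked from each slice. Choose the children $x_{s\frown i}\in S_{s,i}$ realizing this, with a small correction so that $x_s=\tfrac{1}{n(s)}\sum_{i}x_{s\frown i}$ exactly, and set $C_{s\frown i}$ to be a suitable closed convex neighborhood of $x_{s\frown i}$ inside $S_{s,i}$. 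By $\delta$-non-dentability the children are spread at scale $\delta$; by strong regularity the ``clouds'' of descendants at deeper levels shrink to diameter zero, so branches of the bush form Cauchy sequences.

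Let $C:=\cco\{x_s:s\in\mathcal{T}\}$; then $C$ is closed, bounded, convex, separable, and contained in $D$. It remains to show $C$ has no extreme points. The named vertices $x_s$ are non-extreme by construction, since each is an honest convex combination of its children, all of which lie in $C$; but closure can introduce new points and we must propagate the splitting to all of $C$. For $y\in C$, approximate $y$ by convex combinations $\sum_j\lambda_j x_{s_j}$ and replace each $x_{s_j}$ by the midpoint of two disjoint halves of its children; the $\delta$-separation forces the two resulting approximants to stay at mutual distance bounded below by a fixed multiple of $\delta$, while their midpoint converges to $y$. A compactness argument then extracts $y_1\neq y_2$ in $C$ with $y=\tfrac12(y_1+y_2)$. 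This last propagation step is the main obstacle, requiring the parameters $\varepsilon_{|s|}$, the branching numbers $n(s)$, and the convex weights to be chosen compatibly so that the averaging error at every scale remains much smaller than $\delta$ in the limit.
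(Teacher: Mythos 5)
This statement is quoted in the paper verbatim from Schachermayer \cite{22} as background; the paper offers no proof of it, so your attempt has to be judged against the argument of \cite{22} and the bush/martingale machinery (\cite{21}, \cite{2}, and Section 2 of this paper) that descends from it. Your skeleton is the right one: extract a $\delta$-separated bush from the failure of RNP, use strong regularity for control, set $C=\cco\{x_s:s\in\mathcal{T}\}$, and propagate non-extremality from the vertices to the whole closure. But two steps, as written, are wrong or missing.

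First, ``branches of the bush form Cauchy sequences'' is false: by construction every child satisfies $\|x_s-x_{s\frown i}\|>\delta$, so no branch is Cauchy; what strong regularity controls is the diameter of \emph{convex combinations of slices} (hence the level averages and the block structure of the nodes), not individual branches. Second, and fatally, the propagation step is the entire content of the theorem and your sketch of it does not close. Splitting the children of a vertex into two halves need not produce points a fixed multiple of $\delta$ apart: with four equally weighted children $x_s\pm\delta e_1$, $x_s\pm\delta e_2$ the two half-averages both equal $x_s$. Nor is any ``compactness argument'' available: $C$ fails the RNP, so it is far from norm compact, norm convergence of the two approximating sequences is exactly what you lack, and weak limits can collapse norm separation (the unit vectors of $c_0$ are $1$-separated yet weakly null). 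This is precisely where strong regularity must enter essentially. In \cite{22}, and in the form used in this paper (Lemmas 2.3--2.5 and \ref{lem7}, following \cite{21} and \cite{2}), one proves that strong regularity forces $\cco(\tilde{x}_a)$ to carry a martingale coordinatization: every point of the \emph{closure}, not just the finite convex combinations, is of the form $\sum_{k}\sum_{|a|=k}\mu_a^{(x)}y_a$ with $(\mu_a^{(x)})$ a conditionally determined family. Non-extremality then follows by perturbing these weights up and down along a subtree on which they do not vanish, and it is the $\delta$-separation of the \emph{nodes}, together with the constant of the convex finite-dimensional decomposition, that guarantees the two perturbed points differ. Without establishing that coordinatization for limit points --- which is the actual work of the theorem --- your argument stops exactly at the obstacle you name.
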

\begin{thrm}(Rosenthal, \cite{19}, Th. 2): Let $K$  be a
closed bounded nonempty convex subset of $X$ so that $K$
is non-dentable and has SCS. Then there exists a closed convex nonempty
subset $W$ of $K$ so that 

\emph{(*)} $W$  is non dentable and the weak and norm
topologies on $W$ coincide.

Moreover there exists a subspace $Y$ of $X$ so
that $Y$  has an FDD and a closed bounded convex non-empty subset $
W$  satisfying \emph{(*)}.
\end{thrm}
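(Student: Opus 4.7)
The plan is to iteratively exploit the SCS property of $K$ together with its non-dentability to construct, inside $K$, a bush whose closed convex hull $W$ satisfies (*), and whose branch-difference vectors generate a subspace $Y$ with an FDD.

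First, I would unpack SCS: for every nonempty bounded convex subset $C$ of $K$ and every $\varepsilon>0$, there exist slices $S_1,\ldots,S_n$ of $C$ and scalars $\lambda_i \geq 0$ with $\sum_i \lambda_i = 1$ such that $\diam\!\bigl(\sum_i \lambda_i S_i\bigr) < \varepsilon$. Since $K$ is non-dentable, there is $\delta_0>0$ such that every slice of every convex subset of $K$ witnessing non-dentability has diameter at least $\delta_0$. I would then pick points $x_i \in S_i$ whose convex combination is close to the centroid of $\sum \lambda_i S_i$; the individual points still live in slices that are themselves non-dentable, so the selection procedure can be iterated.

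Next, I would build by induction a bush $(x_t)_{t \in \mathcal{T}}$ indexed by a finitely branching tree $\mathcal{T}$: each $x_t$ is a convex combination of its immediate successors, with the small-combination diameters $\varepsilon_n \to 0$ as we descend. The differences $d_t = x_t - x_{\text{parent}(t)}$ are uniformly bounded below in norm, thanks to non-dentability, and by a standard perturbation-plus-blocking argument (gliding hump) the $d_t$'s can be arranged, after relabelling, to form a basic sequence spanning a subspace $Y$ with an FDD. Setting $W = \cco\{x_t : t \in \mathcal{T}\}$ yields a closed bounded convex non-dentable subset, the non-dentability being inherited from the bush.

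The main obstacle is to verify that the weak and norm topologies coincide on $W$. Here one uses the FDD of $Y$: any element of $W$ admits an expansion along the FDD, and the vanishing successor-combination diameters force the tail-FDD contributions in $W$ to be uniformly norm-small. Consequently, weak convergence in $W$ reduces to convergence of a controlled number of FDD coordinates, and finite-dimensional weak equals norm, giving (*). The delicate point is to choose the bush parameters (the $\varepsilon_n$'s, the weights $\lambda$, and the perturbation magnitudes) so that simultaneously: (a) the $d_t$'s form an FDD basic sequence; (b) the bush stays non-dentable after perturbation; and (c) the FDD actually controls weak convergence on $W$. Balancing these constraints via a diagonal argument is the heart of the proof.
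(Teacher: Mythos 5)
This statement is quoted from Rosenthal \cite{19} as background; the paper itself gives no proof of it, so your attempt has to stand on its own. The overall shape of your outline is reasonable (a $\delta$-approximate bush inside $K$, a gliding-hump perturbation of its nodes, $W$ the closed convex hull), and the non-dentability of $W$ does follow from the bush inequality $\|x_a-x_\beta\|>\delta$ for $\beta\in S_a$. The genuine gap is at the one step that carries all the weight: the claim that weak and norm topologies coincide on $W$.

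You derive that coincidence from the FDD of $Y$ together with the vanishing of the successor-combination diameters $\varepsilon_n$, but neither ingredient can do this job. Any non-dentable set in a space with separable dual admits a bush whose nodes, after perturbation, form a basic sequence spanning a space with an FDD --- including sets on which weak and norm emphatically do not coincide --- so the FDD by itself proves nothing. Moreover your key assertion, that ``the tail-FDD contributions in $W$ are uniformly norm-small,'' cannot be right: uniform smallness of tails over all of $W$ would make $W$ totally bounded, hence dentable, contradicting what you are trying to preserve. The place where the SCS hypothesis must enter is exactly here, and in your sketch SCS is used only to select the points $x_i$ near a centroid --- something non-dentability alone already supplies via $x\in\cco\bigl(K\setminus B(x,\delta)\bigr)$. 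What is actually needed is to build the bush so that each level sits inside a convex combination of slices of small diameter; this is what forces the averaged (regular) bush $\tilde x_a$ to converge and every element of $W$ to have a non-atomic martingale coordinatization, and only then does convergence of finitely many coordinate functionals upgrade to norm convergence. Compare Lemma 2.7 of the present paper, where the analogous conclusion is obtained not from a general FDD but from a $c_0$-sum decomposition $X=\bigl(\sum_k\oplus X_k\bigr)_0$ together with non-atomicity of the coordinates: the tail estimate there is a supremum of individually small terms, and that computation fails for a general FDD. Without an argument of this kind, your step (c) is an unproved assertion rather than a consequence of (a) and (b).
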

It is known that for certain classes of spaces (sets) the RNP is equivalent
with the KMP :Dual spaces (Huff-Morris \cite{16}, based on the work of Stegall
\cite{24}), subsets of the positive cone of $L^{1}$ (Argyros-Deliyanni \cite{2}),
spaces which can be embedded to a space with unconditional FDD (James \cite{17}),
spaces with $X\equiv X\oplus X$ (Schachermayer \cite{23}), Banach Lattices
(Bourgain-Talagrand \cite{12}).

It is shown in \cite{2} that in many of the above cases any convex, closed,
bounded non-dentable set contains a subset with the $\mathcal{P}al$
representation.

We believe that a positive answer to the problem of equivalence of the RNP
and the KMP on the closed convex bounded (c.c.b.) subsets of $C(a)$, where $
a $ is a countable ordinal and a similarly positive answer on the c.c.b.
subsets of $L^{1}$, is a strong indication that the RNP and KMP are
equivalent properties on the c.c.b. subsets of a general Banach space $X$.

In this paper we show that the RNP and the KMP are equivalent on the closed
convex bounded subsets of $C(a)$ for ordinals $a<\omega ^{\omega ^{\omega
}}. $

The main results in our paper are:

\emph{Theorem 3.2}: \textit{Let} $X$ \textit{be a
separable Banach space that contains no copy of}
$l_{1}(\mathbb{N}
)$ \textit{and }$Q_{n}:X\rightarrow C(\omega ^{\omega ^{k}})$,$n\in
\mathbb{N} 
$ \textit{be bounded linear operators. Suppose $K$ is a closed,
convex, bounded, non-PCP subset of $X$ , such  that the PCP and the
RNP are equivalent properties on the subsets of $K$. Then  there
exists $L$ closed, convex, bounded, non-dentable subset of $K$,
such that on $Q_{n}(L)$  norm and weak topologies
coincide for all $n\in
\mathbb{N}
$}.

\emph{Theorem 3.3 }:\textit{Let }$K$\textit{\ be a  closed, convex, bounded,
non-dentable subset of }$C(\omega ^{\omega ^{k}})$\textit{. Then there
exists a convex, closed subset }$L~$\textit{of }$K$,\textit{\ such that }$L$
\textit{\ has the PCP and fails the RNP. Therefore the KMP and the RNP are
equivalent on the subsets of }$C(\omega ^{\omega ^{k}})$\textit{.}

The set $L$, mentioned in  Theorem 3.2  is constructed to be the closed
convex hull of a $\delta $ -approximate bush which has the Convex
Finite-Dimensional Schauder Decomposition (C.F.D.S.D.) (\cite{7}, \cite{2}, \cite{21}), and
is given by the closed convex hull of the average back bush of a $\delta -$
approximate bush.

The result of the Theorem 3.3 is the "best" possible concerning the spaces $
C(\omega ^{\omega ^{a}}),$ for $a$ ordinal, since E. Odell \cite{18} has proved,
in unpublished work, that the space $C(\omega ^{\omega ^{\omega }})$
contains a convex, closed, bounded non-dentable subset $L$ where the PCP is
equivalent with the RNP.
\section*{Preliminaries}
\textit{RNP and related properties.}

Let $K$ a closed, convex, bounded subset of a Banach space $X.$

The set $K$ has the \textit{Radon-Nikodym property} (RNP) if for every
probability space ($\Omega ,\mathcal{B},\mu $) and every $X-$valued measure $
m$ on $\mathcal{B}$ which is absolutely continuous with respect to $\mu $
and whose average range is contained in $K,$ there exists an $f\in
L_{X}^{1}(\Omega ,\mathcal{B},\mu )$ such that $m(A)=\int\limits_{A}fd\mu $
(Bochner integral) for each $A\in \mathcal{B}$. It has the 
\textit{Krein-Milman Property} (KMP) if each closed, convex, bounded subset of $K$
is the closed convex hull of its extreme points.

A \textit{slice }$S(f,a,K)$ of $K,$ determined by $f\in X^{\ast }$ and $a>0,
$ is the set $S(f,a,K)=\{x\in K:f(x)\geq \sup f(K)-a\}.$

The set $K$ is said to be \textit{strongly regular} if for every non-empty
subset $L$ of $K$ and any $\varepsilon >0$, there exists positive scalars $%
a_{1},a_{2},...a_{n}$ with $\sum\limits_{i=1}^{n}a_{i}=1$ and slices $%
S_{1},S_{2},...S_{n}$ of $L$ such that the diameter of $\sum%
\limits_{i=1}^{n}a_{i}S_{i}$ is less than $\varepsilon $. The set $K$ has
the \textit{Point of Continuity Property} (PCP) if for every weakly closed
non-empty subset $L$ of $K$ the identity map $i:(L,w)\rightarrow
(L,\left\Vert .\right\Vert )$ has a point of continuity. The set $K$ has the
\textit{Convex PCP }(CPCP)\textit{\ }if for every closed convex non-empty
subset $L$ of $K$ the identity map $i:(L,w)\rightarrow (L,\left\Vert
.\right\Vert )$ has a point of continuity (\cite{15}).

If $K$ is non-PCP then there exists an $L\subseteq K$ and $\delta >0$ so
that $L$ is $\delta -$non-PCP (i.e. for every weak open subset $W$ of $L$ we
have $diamW>\delta $ \cite{7}). Of course if $K$ is $\delta -$non-PCP, then $K$ is
non-PCP.

It is well known that if $K$ has PCP then $K$ is strongly regular (\cite{9}).

\smallskip \textit{Operators on }$L^{1}$ \textit{and RNP.}

Let $\mathcal{P}(\mu )=\{f\in L^{1}(\mu ):f\geq 0$ and $\int fd\mu =1\}$ be
the probability densities in $L^{1}(\mu ).$

It is well known that $K$ has RNP if and only if, every bounded linear
operator $T:L^{1}(\mu )\rightarrow X$ such that $Tf\in K$ for every $f\in
\mathcal{P}(\mu )$ is representable. An operator $T:L^{1}(\mu )\rightarrow X$
is said to be \textit{representable} if there is a function $g\in
L_{X}^{\infty }(\mu )$ such that $Tf=\int fgd\mu $ (Bochner integral) for
every $f\in L^{1}(\mu )$ (\cite{14}).

The set $K$ is strongly regular if and only if every bounded linear operator
$T:L^{1}(\mu )\rightarrow X$ with $T(\mathcal{P})\subseteq K$ is \textit{%
strongly regular }(which means that if a net $(f_{i})_{i\in I}\subseteq
\mathcal{P}$ converges weakly to $f\in \mathcal{P}$ then $Tf_{i}\overset{%
\left\Vert .\right\Vert }{\rightarrow }Tf$ \cite{15}).

A bounded linear operator $T$ from $L^{1}$ to a Banach space $X$ is said to
be \textit{Dunford-Pettis operator} if $T$ maps every weakly compact subset
of $L^{1}$ into a norm compact subset of $X$ (\cite{14}).

\smallskip
{\textit{Indices, trees and bushes.}

In the notation we follow \cite{2}.
If the set of all finite sequences of natural numbers of the form $%
a=(0,a_{1},a_{2},...a_{n})$ is denoted by $%
\mathbb{N}
^{(
\mathbb{N}
)}$, using the notion of length ($\left\vert 0\right\vert =0$, $\left\vert
(0,a_{1},a_{2},...a_{n})\right\vert =n$) and the notion of restriction ( $%
a/n=\left\vert (0,a_{1},a_{2},...a_{n})\right\vert $, if $\left\vert
a\right\vert \geq n$ ) we can define a partial order in $%
\mathbb{N}
^{(
\mathbb{N}
)}$ by $a\leq \beta $ if and only if $\left\vert a\right\vert \leq
\left\vert \beta \right\vert $ and $\beta /\left\vert a\right\vert =a$, when
$a,\beta \in
\mathbb{N}
^{(%
\mathbb{N} )}$. We also make use of the lexicographic total order
of $\mathbb{N}^{(\mathbb{N})}$ and denote it by $<_{\text{lex}}$.
A subset $\mathcal{A}$ of $%
\mathbb{N}
^{(
\mathbb{N}
)}$ is called a \textit{finitely branching tree }if the set $\left\{ a\in
\mathcal{A}:\left\vert a\right\vert =n\right\} $ is finite for every $n\in
\mathbb{N}
$, when $n\leq \left\vert a\right\vert $ and $a/n\in \mathcal{A}$. The set
of the immediate successors of $a\in \mathcal{A}$ is denoted by $%
S_{a}=\left\{ \beta :a<\beta ,\left\vert \beta \right\vert =\left\vert
a\right\vert +1\right\} $ and is finite when $\mathcal{A}$ is a finitely
branching tree.

Let $(\varepsilon_{n})_{n}\subset (0,1)$ be such that 
$\sum\limits_{n=0}^{\infty }\varepsilon _{n}<\frac{\delta }{4}$.
A bounded subset $(x_{a})_{a\in\mathcal{A}}$ of a Banach space $%
X$ is called a $\delta -$\textit{approximate bush} with $\delta >0,$ if and
only if $\mathcal{A}$ is a finitely branching tree, for every $a,\beta \in
\mathcal{A}$ with $\beta \in S_{a}$ we have 
$\lVert x_{a}-x_{\beta
}\rVert >\delta $ and there exists $\{ \lambda _{\beta }:\beta \in
S_{a}\} $ with $\lambda _{\beta }\geq 0$, $\sum\limits_{\beta \in
S_{a}}\lambda _{\beta }=1$ and $\lVert x_{a}-\sum\limits_{\beta \in
S_{a}}\lambda _{\beta }x_{\beta }\rVert <\varepsilon _{\left\vert
a\right\vert }$. The vectors $y_{\beta}=x_{\beta }-x_{a}$, where $\beta\in S_a$, are called the \textit{%
nodes} of the approximate bush.

We have the identity: $\sum\limits_{\left\vert \beta \right\vert
=m}\lambda _{\beta }x_{\beta
}=\sum\limits_{n=0}^{m}\sum\limits_{\left\vert a\right\vert =n}\mu
_{a}y_{a}$ where $\mu _{a}=\lambda _{a}$ for $\left\vert
a\right\vert =m$ and if $m>\left\vert a\right\vert $, $\mu
_{a}=\sum\limits_{\beta \in S_{a}}\mu _{\beta }.$

We can then define the notion of the \textit{average back bush} $(\widetilde{%
x_{a}})_{a\in\mathcal{A}}$ corresponding to the approximate
bush. Set $x_{a}^{m}=\sum\limits_{\left\vert \beta \right\vert =m}\lambda
_{\beta }^{(a)}x_{\beta }$ for $a\in\mathcal{A}$ and $m>\left\vert
a\right\vert $, a convex combination, where the numbers $\lambda _{\beta
}^{(a)}$ are defined inductively. If $m=\left\vert a\right\vert +1$ then $%
x_{a}^{m}=\sum\limits_{\beta \in S_{a}}\lambda _{\beta }x_{\beta }$ with
the numbers $\lambda _{\beta }$ those in the definition of the $\delta -$%
approximate bush and if the numbers $\lambda _{\beta }^{(a)}$ with $%
\left\vert \beta \right\vert =n$ are defined for some $n,$ then set $\lambda
_{\gamma }^{(a)}=\lambda _{\beta }^{(a)}\lambda _{\gamma }$ for $\left\vert
\gamma \right\vert =n+1$ with the numbers $\lambda _{\gamma }$ those in the
definition of the $\delta -$approximate bush. It can be shown that the
sequence $\{x_{a}^{m}\}_{m>\left\vert a\right\vert }$ is norm Cauchy. Define
$\widetilde{x_{a}}=\underset{m\rightarrow \infty }{\lim }x_{a}^{m},$
then for $a\in\mathcal{A}$, $\beta \in S_{a}$ we have $\left\Vert
\widetilde{x_{a}}-\widetilde{x_{\beta }}\right\Vert >\frac{\delta }{2}$ , $%
\widetilde{x_{a}}=\sum\limits_{\beta \in S_{a}}\lambda _{\beta }\widetilde{%
x_{\beta }}$ and every $\widetilde{x_{a}}$ belongs to $\overline{co}%
(x_{a})_{a\in\mathcal{A}}$.

Let $(y_{a})_{a\in\mathcal{A}}$ and $(\widetilde{y_{a}}%
)_{a\in\mathcal{A}}$ be the nodes of the $\delta -$approximate bush $%
(x_{a})_{a\in\mathcal{A}}$ and the nodes of the corresponding
regular bush $(\widetilde{x_{a}})_{a\in\mathcal{A}}$
respectively, when the family $(\mu _{a})_{a\in\mathcal{A}}$ of real
numbers is a normalized conditionally determined family (which means that $%
\mu _{0}=1,$ $\mu _{a}\geq 0,$ and $\mu _{a}=\sum\limits_{\beta \in
S_{a}}\mu _{\beta },\ $\cite{21}).It is true that $\sum\limits_{n=0}^{\infty
}\sum\limits_{\left\vert a\right\vert =n}\mu
_{a}y_{a}=\sum\limits_{n=0}^{\infty }\sum\limits_{\left\vert a\right\vert
=n}\mu _{a}\widetilde{y_{a}}$ whenever either series converges.

\smallskip

\textit{The spaces }$C(\omega ^{\omega ^{k}}).$

Let $\omega $ be the first infinite ordinal number corresponding to $%
\mathbb{N}
$ and $k\in
\mathbb{N}
^{\ast }$.

It is true that $\omega ^{\omega ^{k}}=\sum\limits_{n=0}^{\infty
}\omega ^{\omega ^{k-1}\cdot n}$ and if $C(K)$ is the space of
continuous real functions defined on the set $K$, we have
$C(\omega ^{\omega ^{k}})=\left( \sum\limits_{n=0}^{\infty }\oplus
C(\omega ^{\omega ^{k-1}\cdot n})\right) _{0}$. This can be proved
by the result due to Bessaga and Pelczynski \cite{6}.

\textit{Theorem (Bessaga-Pelczynski):} \textit{If }$a<\beta $\textit{\ are
countable ordinals, then }$C(a)$\textit{\ and }$C(\beta )$\textit{\ are
isomorphic Banach spaces if and only if }$\beta <a^{\omega }$\textit{.}

Also we have that $C(\omega )$ is isomorphic to $c_{0}$ and $C(\omega
^{\omega ^{k}})$ is isomorphic to $C(\omega ^{\omega ^{k}\cdot n})$ for $%
n\in
\mathbb{N}
$.

Finally it is known that $C(\omega ^{\omega })$ and hence $C(a)$ with $%
a>\omega ^{\omega }$ , can not be embedded in a Banach space with
unconditional basis (in fact $C(\omega ^{\omega })$ can not be embedded in a
Banach space with unconditional FDD), (Pelczynski's thesis). See also \cite{1}
(Theorem 4.5.2). Of course that means, that no $C(\omega ^{\omega ^{k}})$
can be embedded in a Banach space with unconditional basis since $C(\omega
^{\omega })$ is a subspace of $C(\omega ^{\omega ^{k}})$ for every $k\in
\mathbb{N}
^{\ast }.$

\smallskip
\textit{The fundamental example.}

In \cite{4} one can find two examples of closed bounded convex subsets of $c_{0}$%
. The first example has the CPCP but fails PCP. The second example has
strong regularity but fails the CPCP.

These examples are the prototype for the following simplified example which
is funtamental for our work.

We denote by $\mathscr{D}$\  the dyadic tree (i.e. the family of all finite
sequences consisting of 0's and 1's), ordered by the initial segment partial
order and we endow $c_{00}(\mathscr{D})$\  with the supremum norm. Clearly
its completion is $c_{0}(\mathscr{D})$. For $a\in \mathscr{D}$, we denote
by $x_{a}=\sum_{\gamma \leqslant a}e_{\gamma }$, where $(e_{a})_{a\in %
\mathscr{D}}$ is the natural basis of $c_{00}(\mathscr{D})$. We also set
\begin{equation*}
\tilde{x}_{a}=x_{a}+\sum_{k=1}^{\infty }\sum_{\substack{ |b|=|a|+k \\ b>a}}%
\frac{1}{2^{k}}e_{b}
\end{equation*}%
Then on the set $K=\cco(\tilde{x}_{a})_{a\in \mathscr{D}}$\  the weak and
norm topologies coincide.
\section{`` Large ''
operators on $L^{1}$ with `` small '' projections}
\begin{proposition} Let $X,$ $X_{n}$ $n\in
\mathbb{N}
$, be Banach spaces. Suppose that $X=\sum\limits_{n=1}^{\infty
}\oplus X_{n}$ and that there exists a non-strongly regular
operator $T:L^{1}(0,1)\rightarrow X$, such that the operators 
$P_{n}T:L^{1}(0,1)\rightarrow X_{n}$ are strongly regular for every 
$n\in
\mathbb{N}
$ where $P_{n}$ denote the projections 
$P_{n}:X\rightarrow X_{n}$.

Then there exists an operator $D:L^{1}(0,1)\rightarrow L^{1}(0,1)$
such that $TD:L^{1}(0,1)\rightarrow X$  is
non-representable and the operators $P_{n}TD:L^{1}(0,1)\rightarrow X_{n}$%
 are representable for every $n\in
\mathbb{N}$.
\end{proposition}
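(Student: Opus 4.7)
The plan is to extract, from the failure of strong regularity of $T$, a weakly null sequence $(h_{m})\subset L^{1}(0,1)$ on which $T$ stays $\delta$-separated and each $P_{n}T$ vanishes in norm; to use $(h_{m})$ to build an exact dyadic bush of probability densities whose $T$-image is $\delta$-separated in $X$ but whose $P_{n}T$-image converges along every branch; and then to define $D$ as the martingale operator sending dyadic indicators on $(0,1)$ to this bush. Non-representability of $TD$ will be witnessed directly by the bush, whereas representability of each $P_{n}TD$ will follow from the collapse of the $P_{n}T$-image to a Bochner-integrable kernel $g_{n}\in L^{\infty}_{X_{n}}(0,1)$.

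For the extraction, since $T$ is not strongly regular and $L^{1}(0,1)$ is separable, standard arguments on $\mathcal{P}$ should yield $\delta>0$ and a sequence $(h_{m})\subset L^{1}(0,1)$ with $\|h_{m}\|_{1}\leq 2$, $\int h_{m}\,d\mu =0$, $h_{m}\to 0$ weakly, and $\|Th_{m}\|\geq \delta$ for all $m$. Strong regularity of each $P_{n}T$ forces $\lim_{m}\|P_{n}Th_{m}\|=0$ for every fixed $n$, so after an $L^{\infty}$-truncation (losing at most a factor $2$ in $\delta$) and a diagonal extraction I may assume $\|h_{m}\|_{\infty}\leq M$ and $\|P_{n}Th_{m}\|\leq 2^{-n-m}$ whenever $n\leq m$. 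I would then build, by induction on the dyadic tree $\mathscr{D}$, a family $(\phi_{a})_{a\in\mathscr{D}}\subset \mathcal{P}$ by setting the root density to be $\mathbf{1}_{(0,1)}$ and, given $\phi_{a}$, $\phi_{a0}=\phi_{a}+c_{a}h_{m(a)}$ and $\phi_{a1}=\phi_{a}-c_{a}h_{m(a)}$, with $m(a)\geq |a|$ and $c_{a}\in (0,1/M]$ chosen to maintain $\phi_{ai}\geq 0$ almost everywhere while satisfying a uniform lower bound $c_{a}\geq c_{0}>0$. This produces an exact martingale $\phi_{a}=\tfrac{1}{2}(\phi_{a0}+\phi_{a1})$ with $\|T\phi_{a0}-T\phi_{a1}\|\geq c_{0}\delta$ and $\sum_{a\in\mathscr{D}}\|P_{n}T(\phi_{a0}-\phi_{a1})\|<\infty$ for every $n$.

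Finally I would analyze $D$. With $(I_{a})_{a\in\mathscr{D}}$ the standard dyadic partition of $(0,1)$, set $D_{n}f=\sum_{|a|=n}\bigl(\int_{I_{a}} f\,d\mu\bigr)\phi_{a}$; the martingale identity makes $(D_{n}f)_{n}$ Cauchy in $L^{1}$ for every $f\in L^{1}(0,1)$, so $Df=\lim_{n}D_{n}f$ defines a bounded operator with $\|D\|\leq 1$ and $D(\mathbf{1}_{I_{a}}/\mu(I_{a}))=\phi_{a}$. Non-representability of $TD$ is immediate because a representable operator has its dyadic averages convergent a.e.\ along branches, which is incompatible with the uniform separation $\|T\phi_{a0}-T\phi_{a1}\|\geq c_{0}\delta$ of the bush $(T\phi_{a})_{a\in\mathscr{D}}=(TD(\mathbf{1}_{I_{a}}/\mu(I_{a})))_{a\in\mathscr{D}}$. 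For each $P_{n}TD$, summability of $\|P_{n}T(\phi_{a0}-\phi_{a1})\|$ forces $P_{n}T\phi_{a|k}$ to be norm-Cauchy along every infinite branch of $\mathscr{D}$, defining a bounded measurable function $g_{n}:(0,1)\to X_{n}$ with $P_{n}TDf=\int fg_{n}\,d\mu$, i.e.\ representability. The main obstacle is the simultaneous bookkeeping in the inductive step: the $c_{a}$'s must stay bounded below to preserve $T$-separation of the bush, yet be small enough to keep the $\phi_{a}$'s nonnegative, and at the same time the indices $m(a)$ must grow fast enough for $\sum_{a}\|P_{n}T(\phi_{a0}-\phi_{a1})\|$ to be finite for every $n$ simultaneously.
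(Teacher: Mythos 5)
Your overall architecture (bush of densities $\to$ martingale operator $D$ $\to$ non-representability of $TD$ from separation, representability of $P_nTD$ from summability of the projected nodes) matches the paper's, but the way you build the bush contains a genuine gap, and it is exactly the one you flag at the end without resolving. With additive perturbations $\phi_{a0}=\phi_a+c_ah_{m(a)}$, $\phi_{a1}=\phi_a-c_ah_{m(a)}$ by a \emph{pre-extracted} sequence, nonnegativity requires $c_a\,|h_{m(a)}|\leqslant \phi_a$ pointwise. Since $\int h_m=0$ and $\|Th_m\|\geqslant\delta$ force $\|h_m\|_1\geqslant \delta/\|T\|$, each $h_m$ exceeds some fixed $\eta>0$ on a set of measure at least $\eta/M$; these sets have no reason to be disjoint (for Rademacher-type $h_m$ they all have measure $\tfrac12$), so along a branch that keeps subtracting the positive part, the essential infimum of $\phi$ on a set of positive measure drops by at least $\eta c_a$ at each level. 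Nonnegativity then forces $\sum_{a\in\text{branch}}c_a<\infty$, hence $c_a\to 0$ along every such branch, and with it $\|T\phi_{a0}-T\phi_{a1}\|=2c_a\|Th_{m(a)}\|\to 0$: the uniform separation on which the non-representability of $TD$ rests is destroyed. This is not bookkeeping that a cleverer choice of $c_a$, $m(a)$ fixes: an exact $\delta$-separated dyadic martingale of densities is a strictly stronger object than non-dentability (or non-strong-regularity) provides in general, which is why the paper builds an \emph{approximate}, finitely branching bush instead --- at each node it takes a net $f_{a,i}\overset{w}{\to}f_a$ with $\|Tf_{a,i}-Tf_a\|>\delta$, applies Mazur's theorem to get finitely many successors whose convex combination is only $2^{-n}$-close to $f_a$, and defines $D$ from the resulting \emph{quasi}-martingale. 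The exact dyadic tree is reserved for Proposition 1.2 (the non-Dunford--Pettis case), where Bourgain's theorem produces nodes of the \emph{multiplicative} form $2\psi_{n,k}r_{n,k}$ with $\|r_{n,k}\|_\infty\leqslant 1$; multiplicativity is precisely what reconciles nonnegativity with uniform separation, and it is a nontrivial theorem, not an extraction.

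A secondary problem is the very first step: ``standard arguments'' do not yield a weakly null \emph{sequence} $(h_m)$ with $\|Th_m\|\geqslant\delta$ from non-strong-regularity, because the weak topology on the bounded set $\mathcal{P}_U$ is not first countable ($L^\infty(0,1)$ is nonseparable), so the net $f_{a,i}\overset{w}{\to}f_a$ with $\|Tf_{a,i}-Tf_a\|>\delta$ cannot be replaced by a sequence by a neighbourhood-basis argument; the paper works with nets and Mazur's theorem for exactly this reason. (Such a sequence does exist, but via the implication ``non-strongly-regular $\Rightarrow$ non-Dunford--Pettis'' and the sequential characterization of non-DP operators, which again lands you in the setting of Proposition 1.2 rather than a routine extraction.) The parts of your argument that do go through --- the definition of $D$ from a martingale of densities, non-representability of $TD$ from uniform separation of the bush, and representability of $P_nTD$ from $\sum_a\|P_nT(\phi_{a0}-\phi_{a1})\|<\infty$ via the Cauchy-in-Bochner-norm criterion --- coincide with the paper's.
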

\begin{proof}
Since $T$ is non strongly regular there exists a Borel set $U\subset (0,1)$
and $\delta >0$ such that for every weak open subset $W$ of $\mathcal{P}_{U}$
we have :

\emph{(1) \ \ }$diam(T(W))>2\delta $ (Theorem IV.10, \cite{15})

(where $\mathcal{P}_{U}=\{f\in L^{1}(0,1):f\geq 0,\int f=1,$ $suppf\subset U$%
, are the densities supported in $U$).

Since $P_{n}T$ are strongly regular operators, for every $n\in
\mathbb{N}$ , we get that:

\emph{(2) }$\ \ \ $the maps $P_{n}T:\mathcal{P}_{U}\rightarrow X_{n}$ are
weak to norm continuous.\qquad

Inductively we define $(f_a)_{a\in\mathcal{A}}$ in
$\mathcal{P}_{U}$ satisfying the following properties:
\begin{itemize}
\item[(i)] For every $a\in\mathcal{A}$ and $\beta\in
S_a\quad\|Tf_a - Tf_{\beta}\| > \delta.$ 
\item[(ii)] For all
$a\in\mathcal{A}$ there exists $(\lambda_\beta)_{\beta\in S_a},
\lambda_{\beta}\geq 0, \sum_{\beta\in S_a} \lambda_{\beta} = 1$ and
$\|f_a - \sum_{\beta\in S_a}\lambda_\beta x_\beta\| <
\frac{1}{2^n}$ 
\item[(iii)] For all $n\in\mathbb{N}$, for all
$a\in\mathcal{A}$ with $|a|\geqslant n$ and $\beta\in S_a$,
$\lVert P_nTf_a - P_nTf_\beta\rVert < \frac{1}{2^n}$
\end{itemize}
The construction goes as follows. Assume that $(f_a)_{|a|\leqslant
n}$ has been chosen satisfying the inductive assumptions. Then
setting $\mathcal{A}_n = \{a: |a|=n\}$, for every
$a\in\mathcal{A}_n$ we choose a net $(f_{a,i})_{i\in I_a}\subset
\mathcal{P}_U$ such that $f_{a,i}\stackrel{\text{w}}{\rightarrow}
f_a$ and $\|Tf_{a,i} - Tf_a\|>\delta$.

Since for  $k=1,\ldots,n+1\quad
P_kTf_{a,i}\stackrel{\|\cdot\|}{\rightarrow} P_kTf_a$ we may
assume that the net $(f_{a,i})_{i\in I_a}$ satisfies $\|P_{k}Tf_{a,i}
- P_kTf_a\| < \frac{1}{2^{n+1}}$

By Mazur's theorem there exists a finite subset  $F_a$  of $I_a$
and $(\lambda_i)_{i\in F_a}, \lambda_i\geq 0, \sum_{i\in
F_a}\lambda_i = 1$ such that $\|f_a - \sum_{i\in
F_a}\lambda_if_{a,i}\| < \frac{1}{2^{n+1}}$.

We set $S_a=\{\beta:\beta=(a,i), i\in F_{a}\}$ the finite set of the immediate successors
of $a$ and the family $(f_\beta)_{\beta\in S_a}, |a| = n$ is the
desired one.

Let us point out, that if we don't require the $f_{\beta}$,
$\beta\in S_{a}$ to be different, we may assume
$\lim_{n\to\infty}\max\{\lambda_a:\vert a\vert=n\}=0$.  Let
$\left\{ \xi _{n}\right\} _{n\in \mathbb{N}}$ be the
quasi-martingale which is determined by this bush. Then
$\sigma(\cup_{n\in\mathbb{N}}\sigma(\xi_n))=\mathcal{B}(0,1)$ (
the Borel measurable sets).

For a $\cup_{n\in\mathbb{N}}\sigma(\xi_n)-$simple function
$\varphi$ the  limit $D\varphi=\lim\limits_{n\rightarrow \infty
}\int \xi _{n}(t)\varphi (t)dt$ exists. By density we extend the
operator $D$ on $L^{1}(0,1)$.

Then the operator $TD:L^{1}(0,1)\rightarrow X$ is non-representable, since $%
\forall a,$ $\forall \beta \in S_{a}$ we have $\left\Vert Tf_{\beta
}-Tf_{a}\right\Vert >\delta $, while the operators $P_{n}TD:L^{1}(0,1)%
\rightarrow X_{n}$ are representable for every $n\in
\mathbb{N}
$ since:
For $n\in
\mathbb{N}
$ and $\left\vert \gamma \right\vert =m+k>m=\left\vert a\right\vert $, $%
\gamma >a$ we have

$\left\Vert P_{n}Tf_{\gamma }-P_{n}Tf_{a}\right\Vert \leq \frac{1}{2^{m}}+%
\frac{1}{2^{m+1}}+...+\frac{1}{2^{m+k-1}}<\frac{1}{2^{m-1}}$ and so

$\int \left\Vert P_{n}Tf_{\gamma }-P_{n}Tf_{a}\right\Vert dt<\frac{1}{2^{m-1}%
}$.

Taking $m$ big enough this implies that the bush $(P_{n}Tf_{a})_{a\in
\mathcal{A}}$ is Cauchy in Bochner norm in $X_{n}$, for every $n\in
\mathbb{N}
$ and therefore the operators $P_{n}TD$ are representable \cite{14}.
\end{proof}
Of related interest is the following:
\begin{proposition}
Let $X, X_{n}$,$n\in\mathbb{N}$ be Banach spaces. Suppose that $X=\sum\limits_{n=1}^{\infty
}\oplus X_{n}$  and let $T:L^{1}(0,1)\rightarrow X$ be a
non Dunford-Pettis operator such that the operators 
$P_{n}T:L^{1}(0,1)\rightarrow X_{n}$  are Dunford-Pettis.

Then the conclusion of Proposition 1.1 is true.
\end{proposition}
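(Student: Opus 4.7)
The proof would follow the scheme of Proposition 1.1 line by line, with the non-Dunford--Pettis assumption on $T$ replacing the non-strong-regularity assumption. The first task is to produce the analog of statement (1) of the previous proof: a Borel set $U\subset(0,1)$ and $\delta>0$ such that every weakly open subset $W$ of $\mathcal{P}_U$ satisfies $\diam T(W)>2\delta$. I would derive this from the characterization that an operator $T:L^1\to X$ is Dunford--Pettis if and only if its restriction to $\mathcal{P}$ is weak-to-norm sequentially continuous. Negating this gives a density $g$ and a sequence $g_k$ in $\mathcal{P}$ converging weakly to $g$ with $\|Tg_k-Tg\|>2\delta$; by passing to a Borel set $U$ supporting the $g_k$ and $g$, and exploiting that the same $\delta$-separated perturbations of the form $g_k-g$ can be transported to any other point of $\mathcal{P}_U$ via convex combinations, a standard localization/diagonal argument yields the required uniform $\delta$-gap on all of $\mathcal{P}_U$.

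The analog of statement (2) is immediate from the Dunford--Pettis hypothesis on each $P_nT$: on weakly convergent nets in $\mathcal{P}$ one has norm convergence of $P_nT$. With (1) and (2) now established, the inductive construction of the approximate bush $(f_a)_{a\in\A}\subset\mathcal{P}_U$ satisfying conditions (i), (ii), (iii) of Proposition 1.1 is carried out verbatim: for each $a\in\A_n$ one selects a net $(f_{a,i})\subset\mathcal{P}_U$ converging weakly to $f_a$ with $\|Tf_{a,i}-Tf_a\|>\delta$ by (1); one refines it using weak-to-norm continuity of $P_1T,\dots,P_{n+1}T$ so that (iii) is satisfied; and one applies Mazur's theorem to produce the finite convex combinations defining the successors $f_\beta$.

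The passage from the bush to the operator $D:L^1(0,1)\to L^1(0,1)$ via the associated quasi-martingale $\{\xi_n\}_{n\in\mathbb{N}}$, and the verification that $TD$ is non-representable while each $P_nTD$ is representable, then proceed without any modification from the argument of Proposition 1.1: property (i) provides the $\delta$-separation of the bush $(Tf_a)$ in $X$ which blocks representability of $TD$, while property (iii) makes each bush $(P_nTf_a)$ Bochner--Cauchy in $X_n$, whence $P_nTD$ is representable by the classical criterion of \cite{14}.

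The single real obstacle is the first step, namely localizing the non-Dunford--Pettis hypothesis to a uniform $\delta$-gap on a set of the form $\mathcal{P}_U$. Theorem IV.10 of \cite{15} provided this directly in the strongly regular setting; for the Dunford--Pettis side one must combine the density-characterization of DP operators with a careful restriction argument to isolate $U$. Once this uniform-gap statement is in hand, the remainder of the proof is a faithful transcription of the argument for Proposition 1.1.
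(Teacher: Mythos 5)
Your reduction breaks down at what you call statement (2), not at statement (1). You assert that weak-to-norm continuity of $P_nT$ on $\mathcal{P}_U$ is ``immediate from the Dunford--Pettis hypothesis: on weakly convergent nets in $\mathcal{P}$ one has norm convergence of $P_nT$.'' That is false. A Dunford--Pettis operator carries weakly compact sets to norm compact sets, which gives weak-to-norm continuity along weakly convergent \emph{sequences} (a weakly convergent sequence together with its limit is weakly compact), but it gives no control over weakly convergent \emph{nets}, which need not lie in any weakly compact subset of $L^1$. Weak-to-norm continuity of $P_nT\colon\mathcal{P}_U\to X_n$ for nets is exactly strong regularity of $P_nT$ (this is the characterization from \cite{15} that Proposition 1.1 invokes), and that is a strictly stronger hypothesis than Dunford--Pettis. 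The inductive step of Proposition 1.1 really does use nets: since $L^1$ contains $\ell_1$, the device for replacing weak closures by weak sequential closures (which the paper uses later, in Proposition 3.1, precisely under the hypothesis that $X$ contains no copy of $\ell_1$) is unavailable here, and the localization (1) only hands you, at each $f_a$, a net indexed by the weak neighbourhoods of $f_a$ in $\mathcal{P}_U$. With only the Dunford--Pettis hypothesis on the $P_kT$ you cannot refine such a net to achieve condition (iii), so the construction of the bush stalls.

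You have also misplaced the difficulty: statement (1) is actually the easy part, since a non-Dunford--Pettis operator is automatically non-strongly-regular (take $(g_m)$ weakly null with $\|Tg_m\|\geq\delta$; after truncation the densities $f_m=\mathbf{1}+g_m/2M$, suitably normalized, converge weakly to $\mathbf{1}$ in $\mathcal{P}$ while $\|Tf_m-T\mathbf{1}\|$ stays bounded away from $0$), so Theorem IV.10 of \cite{15} applies verbatim. What one really needs, and what your ``transport via convex combinations'' does not supply, is a \emph{sequential} witness of the $\delta$-gap at every node. This is exactly what the paper takes from Bourgain's theorem in \cite{8}: a non-Dunford--Pettis $T$ admits a dyadic tree $(\psi_{n,k})$ in $L^1(0,1)$ such that $(T\psi_{n,k})$ is a $\delta$-tree, with nodes of the special form $d_{n,k}=2\psi_{n,k}r_{n,k}$, where $(r_n)$ is a single weakly null sequence satisfying $\inf_n\|Tr_n\|>\delta'$. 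Because each $P_iT$ is Dunford--Pettis and the $r_{n,k}$ are drawn from an honest weakly null sequence, one may choose them far enough out that $\sum_{k=1}^{2^n}\|P_iTd_{n,k}\|<2^{-n}$ for $n>n_i$, which makes $P_iTD$ representable (even compact), while $(T\psi_{n,k})$ remains a $\delta$-tree, so $TD$ is not representable. You would need to replace your first two steps by this (or an equivalent) sequential tree construction; the concluding part of your outline is then fine.
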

\begin{proof}
It is shown in \cite{8} that if $T:L^{1}(0,1)\rightarrow X$ is non Dunford-Pettis
operator there exists a dyadic tree $\{\psi _{n,k}: n=0,1,\dots, 1\leq k\leq 2^{n}\}$ in
$L^{1}(0,1)$ so that $(T\psi _{n,k})$ is a $\delta -$%
tree in $X.$ The nodes $d_{n,k}=\psi _{n+1,2k-1}-\psi _{n+1,2k}$ of the tree
$(\psi _{n,k})$ can be taken to be of the form $2\psi
_{n,k}r_{n,k},$ where $r_{n,k}$ are elements from a weakly null sequence $%
(r_{n})_{n\in
\mathbb{N}
}$ in $L^{1}(0,1)$ so that $\underset{n}{\inf }\left\Vert Tr_{n}\right\Vert
>\delta ^{\prime }$ for some $\delta ^{\prime }>0$. Since $P_{i}T$ are
Dunford-Pettis for every $i\in
\mathbb{N}$ we may choose the $\{r_{n,k} :n=0,1,\dots, 1\leq k\leq 2^{n}\}$ in such a way so that
for every $i\in\mathbb{N}
$ there exists a $n_{i}\in
\mathbb{N}$ such that $\sum\limits_{k=1}^{2^{n}}\left\Vert P_{i}Td_{n,k}\right\Vert <%
\frac{1}{2^{n}}$ if $n>n_{i}.$

\noindent Let $D:L^{1}(0,1)\rightarrow L^{1}(0,1)$ be the operator defined by the tree
$(\psi _{n,k})$. It follows that the operators $%
P_{i}TD:L^{1}(0,1)\rightarrow X$ are representable for every $i\in
\mathbb{N}
$ (in fact can be taken to be compact).
\end{proof}
\section{Convex sets on which the norm and the weak topologies coincide}
In this section we show   that under certain conditions there exist closed bounded convex sets
on which the norm topology  coincides with the the weak topology.
\begin{dfn}
Let $(x_a)_{a\in\mathcal{A}}$\; be a $\delta$-approximate bush with $%
(y_a)_{a\in\mathcal{A}}$\; the corresponding nodes. Let also $(\tilde{x}%
_a)_{a\in\mathcal{A}}$\; be the regular averaging back bush resulting from $%
(x_a)_{a\in\mathcal{A}}$. We say that the closed convex set $K = \cco(\tilde{%
x}_a)_{a\in\mathcal{A}}$\; satisfies the \emph{non-atomic martingale
coordinatization property} if every $x\in K$\; is represented as $x =
\sum_{k=0}^\infty\sum_{|a|=k}\lambda_a^{(x)}y_a$, with $\lambda_%
\varnothing^{(x)} = 1, \lambda_a^{(x)}\geqslant 0, \lambda_a^{(x)} =
\sum_{b\in S_a}\lambda_b^{(x)}$\; for all $a\in\mathcal{A}$\; and setting $%
\lambda_k^{(x)} = \max\{\lambda_a^{(x)}: |a| = k\}$, then $\lim_{k\to
\infty} \lambda_k^{(x)} = 0$.
\end{dfn}
\begin{ntt}
In the sequel, for a Banach space $X$ admitting a (not  necessarily finite)
Schauder decomposition  $(X_n)_{n\in\mathbb{N}}$\;(i.e. $X=\sum_{k=1}^\infty%
\oplus X_n$) and  $x\in X$\; we say that $I\subset\mathbb{N}$\; is the
support of  $x$, if $x\in\sum_{n\in I}\oplus X_n$. Also for $%
X=\sum_{k=1}^\infty\oplus  X_n$, a family $(y_a)_{a\in\mathcal{A}}$\; is
said to be block, if  $(y_a)_{a\in\mathcal{A}}$\; have pairwise disjoint
supports with respect  to $(X_n)_{n\in\mathbb{N}}$.
\end{ntt}
\begin{dfn}
Let $X$ be a Banach space with a Schauder decomposition $(X_n)_{n\in\mathbb{N%
}}$. A $\delta$-approximate bush $(x_a)_{a\in\mathcal{A}}$\; is said to be a
\emph{block $\delta$-approximate bush}, if there exists a family $(I_a)_{a\in%
\mathcal{A}}$\; of disjoint intervals of $\mathbb{N}$, such that if $a<_{%
\text{lex}}b$, then $I_a < I_b$, and for every $a\in\mathcal{A},\; \supp%
\{y_a\}\subset I_a$.
\end{dfn}
\begin{lem}
Let $X$ be a Banach space with a Schauder decomposition $(X_n)_{n\in\mathbb{N%
}}$ and a block $\delta$-approximate bush $(x_a)_{a\in\mathcal{A}}$\; in $X$%
. Then for $x\in\cco(\tilde{x}_a)_{a\in\mathcal{A}}$\; there exists a unique
non-atomic martingale coordinatization. \label{lem3}
\end{lem}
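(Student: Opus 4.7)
The plan is to exploit the block (disjoint-support) condition together with the bounded Schauder projections to read off coefficients by coordinate projection. Let $P_{I_a}$ denote the Schauder projection onto $\bigoplus_{n\in I_a}X_n$. Since the intervals $(I_a)_{a\in\A}$ are pairwise disjoint with $\supp y_a\subset I_a$, any convergent level-by-level series $z=\sum_{k=0}^{\infty}\sum_{|b|=k}\mu_b\,y_b$ satisfies $P_{I_a}(z)=\mu_a\,y_a$; combined with $\|y_a\|>\delta>0$, this shows $\mu_a$ is uniquely determined by $z$, giving uniqueness. The same formula will serve to define $\lambda_a^{(x)}$ for general $x\in\cco(\tilde x_a)_{a\in\A}$.

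For existence, I would first treat $x=\tilde x_a$. Expanding $x_\beta=\sum_{c\leq\beta}y_c$ inside the convex combination $x_a^m=\sum_{|\beta|=m,\,\beta\geq a}\lambda_\beta^{(a)}x_\beta$ and using the identity for normalized conditionally determined families from the preliminaries yields
\begin{equation*}
x_a^m=\sum_{c\leq a}y_c+\sum_{\substack{a<c\\|c|\leq m}}\lambda_c^{(a)}\,y_c;
\end{equation*}
letting $m\to\infty$ gives $\tilde x_a=\sum_{c\leq a}y_c+\sum_{c>a}\lambda_c^{(a)}y_c$, arranged level by level, with the coefficients conditionally determined and $\lambda^{(\tilde x_a)}_\emptyset=1$. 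Non-atomicity rests on a uniform branching gap: the bush conditions $\|x_c-x_\beta\|>\delta$ for $\beta\in S_c$ together with $\|x_c-\sum_{\beta\in S_c}\lambda_\beta x_\beta\|<\e_{|c|}<\delta/4$ force $\max_{\beta\in S_c}\lambda_\beta\leq q$ for some constant $q<1$ depending only on $\delta$ and the bush diameter $M=\sup_a\|x_a\|$, so $\lambda_c^{(a)}\leq q^{|c|-|a|}\to 0$. Linearity then extends the representation, and all of its properties, to every $x\in\co(\tilde x_a)_{a\in\A}$.

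For $x\in\cco(\tilde x_a)_{a\in\A}$, pick $x_n\to x$ with $x_n\in\co(\tilde x_a)$ and corresponding families $(\lambda_a^{(n)})$. Boundedness of $P_{I_a}$ gives $\lambda_a^{(n)}y_a=P_{I_a}(x_n)\to P_{I_a}(x)$, so $\lambda_a^{(x)}:=\lim_n\lambda_a^{(n)}$ exists and $P_{I_a}(x)=\lambda_a^{(x)}y_a$. The relations $\lambda_\emptyset^{(x)}=1$, $\lambda_a^{(x)}\geq 0$, and $\lambda_a^{(x)}=\sum_{\beta\in S_a}\lambda_\beta^{(x)}$ pass to the limit, the last one because $S_a$ is finite. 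For the series representation, let $Q_m$ be the Schauder projection onto $\bigoplus_{n\in\bigcup_{|a|\leq m}I_a}X_n$; the lexicographic block hypothesis ensures that $\bigcup_{|a|\leq m}I_a$ is an initial segment of $\bigcup_{a\in\A}I_a$ inside $\mathbb{N}$, so the $(Q_m)$ are the initial-segment Schauder projections of the relevant sub-decomposition, uniformly bounded with $Q_m(x)\to x$. Since $Q_m(x_n)=\sum_{k\leq m}\sum_{|a|=k}\lambda_a^{(n)}y_a$ has finitely many terms at each level, letting $n\to\infty$ gives $Q_m(x)=\sum_{k\leq m}\sum_{|a|=k}\lambda_a^{(x)}y_a$, and $m\to\infty$ produces the desired series. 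Non-atomicity is then automatic: the block terms $\sum_{|a|=k}\lambda_a^{(x)}y_a=Q_m(x)-Q_{m-1}(x)\to 0$ in $X$, and the disjoint-support bound together with the Schauder constant $K$ gives $\|\lambda_a^{(x)}y_a\|\leq K\|\sum_{|b|=k}\lambda_b^{(x)}y_b\|\to 0$ uniformly in $|a|=k$, hence $|\lambda_a^{(x)}|\leq\|\lambda_a^{(x)}y_a\|/\delta\to 0$.

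The main point to watch is the identification of the tree-level partial sums $\sum_{|a|\leq m}$ with the initial-segment Schauder projections $Q_m$, since this is precisely where the lexicographic block hypothesis on $(I_a)$ is genuinely used; everything else is a routine passage to the limit once the projection formula $P_{I_a}(x)=\lambda_a^{(x)}y_a$ is in hand.
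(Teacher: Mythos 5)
Your argument is correct and is essentially the paper's: the paper extracts the coefficients $\lambda_a^{(x)}$ as limits $\lambda_a^n\to\lambda_a^{(x)}$ using the biorthogonal functionals $(y_a^*)$ on $\overline{\langle(y_a)_{a\in\mathcal{A}}\rangle}$, which for a block family are exactly your coordinate projections $P_{I_a}$ (up to the normalization $\|y_a\|>\delta$), and it proves non-atomicity from the same tail estimate $\lambda_a^{(x)}\|y_a\|\leqslant C\|\sum_{k\geqslant n}\sum_{|a|=k}\lambda_a^{(x)}y_a\|$. Your write-up is in fact a bit more careful than the paper's about why the limiting series actually represents $x$ (via the projections $Q_m$) and about the base case $x=\tilde{x}_a$, which the paper dismisses as holding ``by definition.''
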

\begin{proof}
By definition, each $\tilde{x}_a$\; has a martingale coordinatization for
all $a\in\mathcal{A}$, this evidently then holds for all $x\in\co(\tilde{x}%
_a)_{a\in\mathcal{A}}$.

Let $x\in\cco(\tilde{x}_a)_{a\in\mathcal{A}}, (x_n)_{n\in\mathbb{N}}\subset%
\co(\tilde{x}_a)_{a\in\mathcal{A}}$, such that $x_n\xrightarrow[]{\Vert\cdot\Vert}x$.%overset{{\tiny \|\cdot\|}%
%}{\longrightarrow}x$. 
Assume that each $x_n = \sum_{k=0}^\infty\sum_{|a|=k}\lambda_a^ny_a$%
\; and $(y_a^*)_{a\in\mathcal{A}}$\; are the biorthogonal functionals of $%
(y_a)_{a\in\mathcal{A}}$, defined on $\overline{<(y_a)_{a\in\mathcal{A}}>}$,
then $y_a^*(x_n)\rightarrow y_a^*(x)$, for all $a\in\mathcal{A}$. Therefore
for each $a\in\mathcal{A}$, there exists $\lambda_a^{(x)}$\; such that $%
\lambda_a^n\rightarrow \lambda_a^{(x)}$. Then $x=\sum_{k=0}^\infty%
\sum_{|a|=k}\lambda_a^{(x)}y_a$\; and this coordinatization is unique.

Also, it is non atomic, since if $x =
\sum_{k=0}^\infty\sum_{|a|=k}\lambda_a^{(x)}y_a$\; and $\varepsilon>0$, then
there exist $n_0\in\mathbb{N}$, such that $\|\sum_{k=n}^\infty\sum_{|a|=k}%
\lambda_a^{(x)}y_a\| < \varepsilon$, for all $n\geqslant n_0$, thus if $|a|
= n \geqslant n_0$, then
\begin{equation*}
\lambda_a^{(x)}\|y_a\| \leqslant C\Big\|\sum_{k=n}^\infty\sum_{|a|=k}%
\lambda_a^{(x)}y_a\Big\| \leqslant C\varepsilon
\end{equation*}
This yields that $\lambda_a^{(x)} \leqslant \frac{C\varepsilon}{\delta}$\;
and hence $\lambda_k^{(x)} = \max\{\lambda_a^{(x)}: |a| = k\}\rightarrow 0$.
\end{proof}
\begin{lem}
Let $X$ be a Banach space with a Schauder decomposition $(X_n)_{n\in\mathbb{N%
}}$\; and $(x_a)_{a\in\mathcal{A}}$\; be a $\delta$-approximate bush in $X$.
Assume moreover that there exists a block $\delta^\prime$-approximate bush $%
(x_a^\prime)_{a\in\mathcal{A}}$, such that by setting $(y_a)_{a\in\mathcal{A}%
}, (y_a^\prime)_{a\in\mathcal{A}}$\; the corresponding families of nodes, we
have that $\|y_a - y_a^\prime\| < \delta_a$\; and\; $\sum_{a\in\mathcal{A}%
}\delta_a < \infty$.

Then the set $K = \cco(\tilde{x}_a)_{a\in\mathcal{A}}$\; satisfies the
non-atomic martingale coordinatization property. \label{lem4}
\end{lem}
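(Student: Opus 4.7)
The strategy is to transfer the non-atomic martingale coordinatization property from Lemma \ref{lem3}, applied to the block $\delta^\prime$-bush $(x_a^\prime)_{a\in\mathcal{A}}$, to the given bush, using the summable perturbation $\sum_{a\in\mathcal{A}}\delta_a<\infty$. Set $K^\prime = \cco(\tilde{x}_a^\prime)_{a\in\mathcal{A}}$, to which Lemma \ref{lem3} applies. The central device is a formal correspondence $\Phi$ on $\co(\tilde{x}_a)$ that takes a convex combination $\sum_b\alpha_b\tilde{x}_b$ (which can be assumed supported on a single level $m$ by iterating the martingale identity $\tilde{x}_a = \sum_{\beta\in S_a}\lambda_\beta\tilde{x}_\beta$) to $\sum_b\alpha_b\tilde{x}_b^\prime\in\co(\tilde{x}_a^\prime)$; equivalently, via the node-expansion identity recalled in the preliminaries, $\Phi$ sends the formal series $\sum_a\mu_a y_a$ to $\sum_a\mu_a y_a^\prime$ with the same conditionally determined family $(\mu_a)$ satisfying $\mu_\emptyset=1$, $\mu_a\in[0,1]$, $\sum_{|a|=k}\mu_a=1$.

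For $x\in K$, pick $x_n\in\co(\tilde{x}_a)$ with $x_n\to x$, write $x_n = \sum_a\mu_a^{(n)}y_a$, and set $x_n^\prime = \Phi(x_n) = \sum_a\mu_a^{(n)}y_a^\prime$. A diagonal extraction yields a subsequence along which $\mu_a^{(n)}\to\lambda_a$ for every $a\in\mathcal{A}$. The estimate
\begin{equation*}
\|(x_n^\prime - x_m^\prime) - (x_n - x_m)\| \leqslant \sum_{a\in\mathcal{A}}|\mu_a^{(n)} - \mu_a^{(m)}|\,\delta_a
\end{equation*}
tends to $0$ as $n,m\to\infty$ by dominated convergence (domination $2\delta_a$ with $\sum_a\delta_a<\infty$), so $(x_n^\prime)$ is Cauchy and converges to some $x^\prime\in K^\prime$. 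Lemma \ref{lem3} yields $x^\prime = \sum_{k=0}^\infty\sum_{|a|=k}\lambda_a y_a^\prime$ uniquely with $\max_{|a|=k}\lambda_a\to 0$; applying the biorthogonal functionals of $(y_a^\prime)$ identifies the coefficients as $\lambda_a = \lim_n\mu_a^{(n)}$. Passing to the limit in the decomposition $x_n = x_n^\prime + \sum_a\mu_a^{(n)}(y_a - y_a^\prime)$, where dominated convergence again controls the perturbation sum, produces
\begin{equation*}
x = x^\prime + \sum_{a\in\mathcal{A}}\lambda_a(y_a - y_a^\prime) = \sum_{k=0}^\infty\sum_{|a|=k}\lambda_a y_a,
\end{equation*}
the sought non-atomic martingale coordinatization of $x$; the conditionally determined property persists under pointwise limits since $\mathcal{A}$ is finitely branching.

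Uniqueness follows by subtracting two putative coordinatizations of $x$ and transferring the vanishing identity into $\overline{\mathrm{span}}(y_a^\prime)$ through the perturbation, where the biorthogonality in the block bush forces equality via Lemma \ref{lem3}. The main obstacle I anticipate is twofold: first, justifying that $\Phi$ is well-defined on $\co(\tilde{x}_a)$ independently of the convex-combination representation chosen, which requires that every martingale-induced collapse among the $(\tilde{x}_a)$ is mirrored among the $(\tilde{x}_a^\prime)$; and second, upgrading the subsequential convergence of $(x_n^\prime)$ to convergence of the whole sequence, which is achieved once uniqueness is in hand because every subsequential limit must then coincide with the $x^\prime$ determined by the unique coordinatization extracted from $x$.
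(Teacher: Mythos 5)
Your proof is correct, and for most of its length it coincides with the paper's: choose a representation $x_n=\sum_a\mu_a^{(n)}y_a$ for each $x_n\in\co(\tilde{x}_a)$, extract a subsequence along which all coefficients converge, show that the transferred sequence $x_n'=\sum_a\mu_a^{(n)}y_a'$ is Cauchy via the summable perturbation (your ``dominated convergence'' is exactly the paper's head/tail splitting at a level $\ell_0$), and identify the limit's coefficients through the biorthogonal functionals of the block family $(y_a')$ and Lemma \ref{lem3}. Where you genuinely diverge is the last step. The paper proves $x=\sum_a\lambda_a y_a$ by contradiction: assuming $\|x_n-y\|>\e$ along a subsequence, it bounds the tail $\|\sum_{k>\ell_0}\sum_{|a|=k}(\lambda_a^n-\lambda_a^{(x)})y_a\|$ from below by $9\e/10$ and from above by $2\e/10+2C\|x_n'-x'\|$ using the projections $P_{|a|>\ell_0}$ of the decomposition. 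You instead pass to the limit directly in the identity $x_n-x_n'=\sum_a\mu_a^{(n)}(y_a-y_a')$, which is legitimate (termwise convergence with summable dominating bound $\delta_a$) and gives $x-x'=\sum_a\lambda_a(y_a-y_a')$, hence $x=\sum_a\lambda_a y_a$ by adding the two convergent level-by-level series. Your route is more elementary: it avoids the block projections and the decomposition constant $C$ entirely at this stage, and it is arguably cleaner than the paper's contradiction argument.

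Both of the ``obstacles'' you flag at the end are non-issues for the statement as given. You do not need $\Phi$ to be well defined on $\co(\tilde{x}_a)$ as a map: it suffices to fix, for each $x_n$, one representation with a conditionally determined coefficient family and carry that particular family over to the primed nodes, which is what both you and the paper actually do. Likewise, the non-atomic martingale coordinatization property (Definition 2.1) only demands the existence of such a representation for every $x\in K$, so subsequential convergence is enough and no upgrade to full-sequence convergence (nor the uniqueness argument you sketch) is required; uniqueness is asserted only in the block case of Lemma \ref{lem3}, where the biorthogonal functionals supply it directly.
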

\begin{proof}
Let $x\in K, (x_n)_{n\in\mathbb{N}}\subset\co(\tilde{x}_a)_{a\in\mathcal{A}}$%
\; with $x_n\overset{\|\cdot\|}{\longrightarrow}x$. If $x_n =
\sum\limits_{k=0}^\infty\sum\limits_{|a|=k}\lambda_a^ny_a$, since the set $\mathcal{A}$\;
is countable and $(\lambda_a^n)_{n\in\mathbb{N}}$\; is bounded for all $a\in%
\mathcal{A}$, by passing to a subsequence we may assume that $%
\lambda_a^n\rightarrow\lambda_a^{(x)}$, for all $a\in\mathcal{A}$.

Define $x_n^\prime = \sum_{k=0}^\infty\sum_{|a|=k}\lambda_a^ny_a^\prime$. By
the fact that $\sum_{a\in\mathcal{A}}\|y_a-y_a^\prime\|<\infty, x_n^\prime$%
\; is well defined and $x_n^\prime\in\cco(\tilde{x}_a^\prime)_{a\in\mathcal{A%
}}$. It will be shown that $(x_n^\prime)_{n\in\mathbb{N}}$\; is a Cauchy
sequence.

Let $\varepsilon>0$. There exists $n_0\in\mathbb{N}$, such that for all $%
n,m\geqslant n_0,\;\|x_n-x_m\| <\frac{\varepsilon}{3}$. There also exists $%
\ell_0\in\mathbb{N}$, such that $\sum_{|a|>\ell_0}\|y_a-y_a^\prime\|<\frac{%
\varepsilon}{6}$. Moreover, there exists $n_1\geqslant n_0$, such that for
all $n,m\geqslant n_1$, for all $a\in\mathcal{A}$\; with $|a|\leqslant
\ell_0:\; |\lambda_a^n - \lambda_a^m| < \frac{\varepsilon}{3M}$, where $M =
\sum_{a\in\mathcal{A}}\|y_a-y_a^\prime\|$.

Then, for $n,m\geqslant n_1$:
\begin{eqnarray*}
\|x_n^\prime - x_m^\prime\| &=& \big\|\sum_{k=0}^\infty\sum_{|a|=k}(%
\lambda_a^n - \lambda_a^m)y_a^\prime\big\| \\
&=& \big\|\sum_{k=0}^\infty\sum_{|a|=k}(\lambda_a^n -
\lambda_a^m)(y_a^\prime-y_a) + x_n - x_m\big\| \\
&\leqslant& \big\|\sum_{k=0}^{\ell_0}\sum_{|a|=k}(\lambda_a^n -
\lambda_a^m)(y_a^\prime-y_a)\big\| \\
&& +\; \big\|\sum_{k=\ell_0 +1}^{\infty}\sum_{|a|=k}(\lambda_a^n -
\lambda_a^m)(y_a^\prime-y_a)\big\| + \|x_n - x_m\| \\
&\leqslant &\max\{|\lambda_a^n - \lambda_a^m| : |a|\leqslant
\ell_0\}\sum_{k=0}^{\ell_0}\sum_{|a|=k}\|y_a^\prime-y_a\| \\
&& +\; \sup\{|\lambda_a^n - \lambda_a^m| : |a|>
\ell_0\}\sum_{k=\ell_0+1}^{\infty}\sum_{|a|=k}\|y_a^\prime-y_a\| \\
&& +\;\|x_n - x_m\| \\
&\leqslant& \frac{\varepsilon}{3M}M + 2\frac{\varepsilon}{6} + \frac{%
\varepsilon}{3} = \varepsilon
\end{eqnarray*}
Hence $(x_n^\prime)_{n\in\mathbb{N}}$\; is converging to some $x^\prime\in%
\cco(\tilde{x}_a^\prime)_{a\in\mathcal{A}}$. As in the previous proof, if we
consider $(y_a^{\prime*})_{a\in\mathcal{A}}$\; the biorthogonal functionals
of $(y_a^\prime)_{a\in\mathcal{A}}$ defined on the space $\overline{%
<(y_a^\prime)_{a\in\mathcal{A}}>}$, then $y_a^{\prime*}(x_n^\prime)%
\rightarrow y_a^{\prime*}(x^\prime)$\; for all $a\in\mathcal{A}$. Thus $%
x^\prime = \sum_{k=0}^\infty\sum_{|a|=k}\lambda_a^{(x)}y_a^\prime$\; and by
virtue of Lemma \ref{lem3}, $(\lambda_a^{(x)})_{a\in\mathcal{A}}$\; is a
non-atomic martingale coordinatization.

As before, $y = \sum_{k=0}^\infty\sum_{|a|=k}\lambda_a^{(x)}y_a$\; is well
defined and $y\in K$. It remains to be shown that $y=x$.

Towards a contradiction, suppose that $x_n\nrightarrow y$. By passing to an
appropriate subsequence, there exists $\varepsilon>0$, such that $\|x_n -
y\| > \varepsilon$\; for all $n\in\mathbb{N}$. There also exists $\ell_0\in%
\mathbb{N}$\; such that $\sum_{|a|\geqslant\ell_0}\|y_a-y_a^\prime\| < \frac{%
\varepsilon}{10}$, moreover there exists $n_1\in\mathbb{N}$\; such that for
all $n\geqslant n_1,\;\|\sum_{k\leqslant\ell_0}\sum_{|a| = k}(\lambda_a^n -
\lambda_a^{(x)})y_a\| < \frac{\varepsilon}{10}$. Hence we have for $%
n\geqslant n_1$:
\begin{eqnarray*}
\big\|\sum_{k>\ell_0}\sum_{|a| = k}(\lambda_a^n - \lambda_a^{(x)})y_a\big\| %
&=& \big\|x_n - y - \sum_{k\leqslant\ell_0}\sum_{|a| = k}(\lambda_a^n -
\lambda_a^{(x)})y_a\big\| \\
&\geqslant & \|x_n - y\| - \big\|\sum_{k\leqslant\ell_0}\sum_{|a| =
k}(\lambda_a^n - \lambda_a^{(x)})y_a\big\| \\
&>& \varepsilon - \frac{\varepsilon}{10} = \frac{9\varepsilon}{10}
\end{eqnarray*}
Also,
\begin{eqnarray*}
\big\|\sum_{k>\ell_0}\sum_{|a| = k}(\lambda_a^n - \lambda_a^{(x)})y_a\big\| %
&=& \big\|\sum_{k>\ell_0}\sum_{|a| = k}(\lambda_a^n - \lambda_a^{(x)})(y_a -
y_a^\prime) \\
&& + \sum_{k>\ell_0}\sum_{|a| = k}(\lambda_a^n - \lambda_a^{(x)})y_a^\prime%
\big\| \\
&\leqslant & \big\|\sum_{k>\ell_0}\sum_{|a| = k}(\lambda_a^n -
\lambda_a^{(x)})(y_a - y_a^\prime)\big\| \\
&&+\; \big\|P_{|a|>\ell_0}(x_n^\prime - x^\prime)\big\| \\
&\leqslant & 2\frac{\varepsilon}{10} + 2C^{}\|x_n^\prime - x^\prime\|
\end{eqnarray*}
Here $P_{|a|>\ell_0}(x)=x-\sum_{\vert a\vert\leq\ell_{0}}\tilde{P}_{\alpha}(x)$, 
where $\tilde{P}_{a}(x)=\sum_{i\in I_{a}}P_{i}(x)$, $P_{i}:X\to X_{i}$ are the natural projections of the decomposition
and $C$ the constant of the decomposition.

By
choosing $n$\; sufficiently large, we have  $\|\sum_{k>\ell_0}\sum_{|a|
= k}(\lambda_a^n - \lambda_a^{(x)})y_a\| < \frac{9\varepsilon}{10}$, a
contradiction that concludes our proof.
\end{proof}
\begin{lem}
Let $X$\; be a Banach space, $\mathcal{A}$\; a finitely branching tree, $%
(y_a)_{a\in\mathcal{A}}$, $(y_a^\prime)_{a\in\mathcal{A}}$\; subsets of $X,
(\varepsilon_n)_{n=0}^\infty$\; a sequence of positive reals with $%
\sum_{n=0}^\infty\varepsilon_n < \infty$\; and $\|y_a -
y_a^\prime\|<\varepsilon_{|a|}$\; for all $a\in\mathcal{A}$. Define:
\begin{equation*}
K=\Big\{x\in X: x =
\sum_{k=0}^\infty\sum_{|a|=k}\lambda_a^{(x)}y_a,\,\lambda_\varnothing^{(x)}
=1, \lambda_a^{(x)}\geqslant 0, \lambda_a^{(x)} = \sum_{b\in
S_a}\lambda_b^{(x)},\,a\in\mathcal{A} \Big\}
\end{equation*}
Suppose $L$\; is a subset of $K$\; and that on the set
\begin{equation*}
L^\prime = \Big\{x\in X: x =
\sum_{k=0}^\infty\sum_{|a|=k}\lambda_a^{(x)}y_a^\prime,\;\text{with}%
\;\sum_{k=0}^\infty\sum_{|a|=k}\lambda_a^{(x)}y_a\in L \Big\}
\end{equation*}
the weak and norm topologies coincide. Then on $L$\; the weak and norm
topologies also coincide. \label{lem5}
\end{lem}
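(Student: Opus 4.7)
The plan is to transport weak convergence on $L$ to $L'$ via the natural near-identity bijection $\Phi\colon L\to L'$ defined by $\Phi\bigl(\sum_k\sum_{|a|=k}\lambda_a^{(x)}y_a\bigr)=\sum_k\sum_{|a|=k}\lambda_a^{(x)}y_a'$, use the hypothesis on $L'$ to obtain norm convergence there, and then transport it back. The map $\Phi$ is well-defined because the level-wise estimate $\bigl\|\sum_{|a|=k}\lambda_a^{(x)}(y_a-y_a')\bigr\|\le\varepsilon_k$ (the weights at level $k$ summing to $1$), together with $\sum_n\varepsilon_n<\infty$, makes the primed partial sums Cauchy whenever the unprimed ones are, and $\Phi(x)\in L'$ by the very definition of $L'$.

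Given a net $(x_\alpha)\subset L$ with $x_\alpha\xrightarrow{w}x\in L$, I would first pass to a subnet along which the coordinates $\lambda_a^{\alpha}$ converge pointwise in $a$ to some $\mu_a$; this is possible because the coefficients lie in $[0,1]$ and $\mathcal{A}$ is countable, so Tychonoff plus diagonalization applies. Uniqueness of the martingale coordinatization, available in the intended applications where $(y_a')$ is a block approximate bush as in Lemmas \ref{lem3}--\ref{lem4}, identifies $\mu_a=\lambda_a^{(x)}$.

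Next I would show that $\Phi(x_\alpha)-x_\alpha\xrightarrow{\|\cdot\|}\Phi(x)-x$ along the subnet. Writing the difference as $\sum_k\sum_{|a|=k}(\lambda_a^{\alpha}-\lambda_a^{(x)})(y_a'-y_a)$ and splitting at some level $N$, the tail is bounded by $2\sum_{k>N}\varepsilon_k$ uniformly in $\alpha$ (each level-$k$ sum is at most $2\varepsilon_k$ since the signed weights have total variation at most $2$), while the head is a finite sum over the finitely many $a$ with $|a|\le N$ and tends to $0$ in norm by the pointwise convergence of the coefficients; choosing $N$ first and then $\alpha$ gives the claim. Combining with $x_\alpha\xrightarrow{w}x$ yields $\Phi(x_\alpha)\xrightarrow{w}\Phi(x)$ in $L'$; the hypothesis upgrades this to norm convergence, and subtracting recovers $x_\alpha\xrightarrow{\|\cdot\|}x$ along the subnet. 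A standard subnet-of-subnet argument promotes this to the full net, completing the proof.

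The main obstacle is the middle step: without extra structure, weak convergence in $L$ need not force pointwise convergence of the coordinates, and the extracted limits $\mu_a$ could in principle differ from $\lambda_a^{(x)}$, destroying the transport. This is why the lemma is to be read in tandem with the preceding two: when $(y_a')$ is a block $\delta'$-approximate bush, the biorthogonal functionals $(y_a')^{*}$ are bounded, and, combined with the summable perturbation $\|y_a-y_a'\|<\varepsilon_{|a|}$, they recover $\lambda_a^{(x)}$ as a weakly continuous functional of $x$ (modulo a uniformly norm-convergent error), forcing the identification $\mu_a=\lambda_a^{(x)}$ and closing the circle.
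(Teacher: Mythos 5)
Your strategy---transport weak convergence through the near-identity map $\Phi\bigl(\sum_a\lambda_a^{(x)}y_a\bigr)=\sum_a\lambda_a^{(x)}y_a'$---has a genuine gap, and it is exactly the one you flag yourself: to get $\Phi(x_\alpha)\xrightarrow{w}\Phi(x)$ you need $x_\alpha-\Phi(x_\alpha)\to x-\Phi(x)$ in norm, which you reduce to pointwise convergence $\lambda_a^{\alpha}\to\lambda_a^{(x)}$ of the coefficients. Nothing in the hypotheses of this lemma delivers that. The lemma assumes no block structure, no linear independence of the $(y_a)$ or $(y_a')$, and in particular no biorthogonal functionals; the representation $x=\sum_a\lambda_a^{(x)}y_a$ need not even be unique, so $\Phi$ is not canonically defined as a map on $L$. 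Your proposed repair---importing the block hypotheses of Lemmas \ref{lem3}--\ref{lem4} so that the functionals $(y_a')^{*}$ exist---both changes the statement being proved and is not actually carried out: $(y_a')^{*}$ applied to $x=\sum_b\lambda_b^{(x)}y_b$ returns $\lambda_a^{(x)}$ only up to error terms $\sum_b\lambda_b^{(x)}(y_a')^{*}(y_b-y_b')$, which you would still have to control, and the functionals live only on $\overline{\langle y_a'\rangle}$ in the first place. Extracting a subnet with $\lambda_a^{\alpha}\to\mu_a$ and asserting $\mu_a=\lambda_a^{(x)}$ is precisely the step that can fail (mass can redistribute or escape along the tree under weak convergence).

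The paper's proof sidesteps coefficient identification entirely. Set $r_a=\sum_{\gamma\leqslant a}(y_\gamma-y_\gamma')$. Finite branching plus $\sum_n\varepsilon_n<\infty$ makes $(r_a)_{a\in\mathcal{A}}$ totally bounded, so $\cco(r_a)_{a\in\mathcal{A}}$ is norm compact. The martingale identity $\sum_{k=0}^{n}\sum_{|a|=k}\lambda_a^{(x)}(y_a-y_a')=\sum_{|a|=n}\lambda_a^{(x)}r_a$ shows that $x-\sum_a\lambda_a^{(x)}y_a'\in\cco(r_a)_{a\in\mathcal{A}}$ for every $x\in L$, hence $L\subset\cco(r_a)_{a\in\mathcal{A}}+L'$, and the sum of a norm-compact set with a set on which the weak and norm topologies coincide again has this property. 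Note that your own tail estimate (total variation $\leqslant 2$ per level against $\varepsilon_k$) is essentially the total-boundedness computation in disguise; what is missing in your write-up is the realization that compactness of the difference set can replace continuity of $\Phi$ altogether, which is what makes the lemma true at the stated level of generality.
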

\begin{proof}
Define $(r_a)_{a\in\mathcal{A}}$\; with $r_a = \sum_{\gamma\leqslant
a}(y_\gamma - y_\gamma^\prime)$. It will be shown that the set $(r_a)_{a\in%
\mathcal{A}}$\; is totally bounded.

Let $\varepsilon>0$. There exists $n_0\in\mathbb{N}$, such that $%
\sum_{n\geqslant n_0}\varepsilon_n < \varepsilon$. Let $\gamma\in\mathcal{A}%
, |\gamma|\geqslant n_0$. Then there exists $a\in\mathcal{A},\; |a| = n_0,\;
a\leqslant\gamma$. We have
\begin{equation*}
\|r_\gamma - r_a\| = \big\|\sum_{a<\delta\leqslant\gamma}(r_\delta -
r_{\delta^-})\big\| = \big\|\sum_{a<\delta\leqslant\gamma}(y_\delta -
y_\delta^\prime)\big\| \leqslant \sum_{a<\delta\leqslant\gamma}\|y_\delta -
y_\delta^\prime\| < \varepsilon
\end{equation*}
Thus the set $(r_a)_{a\in\mathcal{A}}$\; is totally bounded and this means
that $\cco(r_a)_{a\in\mathcal{A}}$\; is norm compact.

Let $x\in L,\; x=\sum_{k=0}^\infty\sum_{|a|=k}\lambda_a^{(x)}y_a$. Since $%
\|y_a - y_a^\prime\| < \varepsilon_{|a|}$, we conclude that $\sum_{k=0}^\infty%
\sum_{|a|=k}\lambda_a^{(x)}y_a^\prime\in L^\prime$\; and $x =
\sum_{k=0}^\infty\sum_{|a|=k}\lambda_a^{(x)}(y_a - y_a^\prime) +
\sum_{k=0}^\infty\sum_{|a|=k}\lambda_a^{(x)}y_a^\prime$. Then we have that
\begin{eqnarray*}
\sum_{k=0}^\infty\sum_{|a|=k}\lambda_a^{(x)}(y_a - y_a^\prime) &=&
\lim_{n\to\infty}\sum_{k=0}^n\sum_{|a|=k}\lambda_a^{(x)}(y_a - y_a^\prime) \\
&=& \lim_{n\to\infty}\sum_{|a|=n}\lambda_a^{(x)}\left(\sum_{\gamma\leqslant
a}(y_\gamma - y_\gamma^\prime)\right) \\
&=& \lim_{n\to\infty}\sum_{|a|=n}\lambda_a^{(x)}r_a\;\in\cco(r_a)_{a\in%
\mathcal{A}}
\end{eqnarray*}
This means that $L\subset \cco(r_a)_{a\in\mathcal{A}} + L^\prime$. Since $%
\cco(r_a)_{a\in\mathcal{A}}$ is norm compact and on $L^\prime$\; the weak
and norm topologies coincide, it can easily be seen that on $\cco(r_a)_{a\in%
\mathcal{A}} + L^\prime$\; the weak and norm topologies coincide, this of
course means that the same is true for $L$.
\end{proof}
\begin{dfn}
Let $X$ be a Banach space with a Schauder decomposition $(X_n)_{n\in\mathbb{N%
}},\newline
\mathcal{A}$\; a finitely branching tree and $(y_a)_{a\in\mathcal{A}}$\; a
subset of X. Then $(y_a)_{a\in\mathcal{A}}$\; is called \emph{eventually
block}, if there exists $n_0\in\mathbb{N},\; (I_a)_{|a|\geqslant n_0}$\; a
family of disjoint intervals of $\mathbb{N}$, such that if $a<_{\text{lex}}b$%
, then $I_a < I_b$, and for every $a\in\mathcal{A},\; \supp\{y_a\}\subset I_a
$.
\end{dfn}
\begin{rem}
For some $a\in\mathcal{A},\;|a|\geqslant n_0$\; it may occur that $y_a = 0$.
\end{rem}
\begin{lem}
Let $X, X_k, k\in\mathbb{N}$\; be Banach spaces with $X =
\left(\sum_{k=1}^\infty\oplus X_k\right)_0 = \{(x_k)_{k\in\mathbb{N}}:
x_k\in X_k,\;\text{and}\; \lim_{k\to\infty}\|x_k\| = 0\},\; (y_a)_{a\in%
\mathcal{A}}$\; bounded and eventually block. Consider the set
\begin{eqnarray*}
L &=& \Big\{x\in X: x = \sum_{k=0}^\infty\sum_{|a|=k}\lambda_a^{(x)}y_a\;%
\text{with}\;\lambda_\varnothing^{(x)} = 1,\;\lambda_a^{(x)}\geqslant
0,\;\lambda_a^{(x)} = \sum_{b\in S_a}\lambda_b^{(x)} \\
&&\text{for all}\;a\in\mathcal{A}\;\text{and}\;\lim_{k\to\infty}\max\{%
\lambda_a^{(x)}: |a| = k\} = 0\Big\}
\end{eqnarray*}
Then on $L$\; the weak and norm topologies coincide. \label{lem7}
\end{lem}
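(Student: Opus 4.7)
The plan is to establish (i) pointwise convergence of the coefficients, $\lambda_a^n\to\lambda_a^{(x)}$ for every $a\in\mathcal{A}$, and (ii) a uniform tail decay $\sup_n\sup_{|a|\geq K}\lambda_a^n\|y_a\|\to 0$ as $K\to\infty$. Combined with the observation that in the $c_0$-sum $X$ a sum of pairwise disjointly supported vectors has norm equal to the supremum of their norms (which applies whenever $|a|\geq n_0$, by the eventually block hypothesis), (i) and (ii) will yield norm convergence.

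Fix $(x_n)\subset L$ with $x_n\xrightarrow{w}x\in L$; choose non-atomic martingale coordinatizations $x_n=\sum_a\lambda_a^n y_a$ and $x=\sum_a\lambda_a^{(x)}y_a$; let $C=\sup_a\|y_a\|<\infty$ and let $n_0$ be the threshold from the definition of eventually block. For (i), I would use the Schauder-decomposition projection $\tilde{P}_a$ of $X$ onto $\bigoplus_{k\in I_a}X_k$ when $|a|\geq n_0$: it annihilates every block vector $y_c$ with $|c|\geq n_0$, $c\neq a$, so $\tilde{P}_a(x_n)=\lambda_a^n y_a+\sum_{|b|<n_0}\lambda_b^n\tilde{P}_a(y_b)$. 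There are only finitely many $b$ with $|b|<n_0$, so by a diagonal extraction one may assume each $\lambda_b^n$ converges; together with the weak convergence $\tilde{P}_a(x_n)\to\tilde{P}_a(x)$ and with the uniqueness of a non-atomic martingale coordinatization in $L$ (argued as in Lemma~\ref{lem3}: the block basic structure for $|a|\geq n_0$ determines those coefficients, and the martingale relation $\lambda_b^{(x)}=\sum_{c\in S_b}\lambda_c^{(x)}$ then determines the low-level ones inductively), one obtains $\lambda_a^n\to\lambda_a^{(x)}$ along the extracted subsequence, and since every subsequence yields the same limit, along the full sequence.

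The heart of the proof is (ii). Arguing by contradiction, if it fails then there exist $\varepsilon>0$, indices $n_j$ and nodes $a_j$ with $|a_j|\to\infty$ and $\lambda_{a_j}^{n_j}\|y_{a_j}\|\geq\varepsilon$, hence $\lambda_{a_j}^{n_j}\geq\varepsilon/C$. The subcase of $(n_j)$ bounded reduces to a single $x_{n^*}$ whose own non-atomicity is contradicted; so assume $n_j\to\infty$. Because $\mathcal{A}$ is finitely branching, a diagonal argument produces a subsequence (still denoted $j$) and an infinite branch $(a^{(m)})_{m\geq 0}$, with $a^{(0)}=\varnothing$ and $a^{(m+1)}\in S_{a^{(m)}}$, such that for every $m$ one has $a_j/m=a^{(m)}$ for all sufficiently large $j$. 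The martingale monotonicity $\lambda_{a/m}\geq\lambda_a$ then yields $\lambda_{a^{(m)}}^{n_j}\geq\varepsilon/C$ for such $j$, and taking $j\to\infty$ using (i) gives $\lambda_{a^{(m)}}^{(x)}\geq\varepsilon/C$ for every $m$. This contradicts the non-atomicity $\max_{|a|=m}\lambda_a^{(x)}\to 0$ of $x$.

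To finish, given $\varepsilon>0$ pick $K\geq n_0$ with $\sup_n\sup_{|a|\geq K}\lambda_a^n\|y_a\|<\varepsilon/3$ and $\sup_{|a|\geq K}\lambda_a^{(x)}\|y_a\|<\varepsilon/3$, and write
\[
x_n-x=\sum_{|a|<K}(\lambda_a^n-\lambda_a^{(x)})y_a+\sum_{|a|\geq K}\lambda_a^n y_a-\sum_{|a|\geq K}\lambda_a^{(x)}y_a.
\]
The first sum is a finite combination of fixed vectors $y_a$ with coefficients tending to $0$ by (i), so its norm is $<\varepsilon/3$ for $n$ large. The two block tail sums have pairwise disjoint supports in $(X_k)$ (as $|a|\geq K\geq n_0$), so in the $c_0$-sum each one has norm equal to the corresponding supremum, which is $<\varepsilon/3$. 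Hence $\|x_n-x\|<\varepsilon$ eventually, as desired. The decisive step is the ancestor-branch diagonalisation in (ii): it is there that the non-atomicity of the weak limit $x$ and the martingale monotonicity combine to rule out the escape of mass to infinity, a phenomenon that is otherwise compatible with weak convergence in a block tail.
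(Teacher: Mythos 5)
Your strategy is genuinely different from the paper's: you argue sequentially (weakly convergent sequences in $L$ with limit in $L$ converge in norm), deducing coefficientwise convergence from the block projections and ruling out escape of mass to infinity by a K\"onig-type branch argument; the paper instead fixes $x\in L$ and $\varepsilon>0$ and exhibits an explicit relative weak neighbourhood $U$ of $x$ of diameter less than $\varepsilon$, cut out by the finitely many biorthogonal functionals $(y_a^*)_{|a|=n_2}$ of a single sufficiently deep level $n_2$ (the martingale identity controls all coefficients below level $n_2$, and martingale monotonicity together with the $c_0$-structure controls everything above). Your branch argument in (ii) is an attractive idea, but as written the proof has two genuine gaps. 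First, the conclusion concerns topologies, and sequential weak-to-norm continuity of the identity on $L$ does not in general imply continuity: one needs the weak topology on the bounded set $L$ to be metrizable (e.g.\ $X^*$ separable) or a net-to-sequence principle (e.g.\ $X\not\supset\ell_1$, via Bourgain--Fremlin--Talagrand, as the paper uses elsewhere), and neither is part of the hypotheses here. The paper's neighbourhood construction is precisely what avoids this.

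Second, ``eventually block'' explicitly allows $y_a=0$ (see the Remark following the definition), and this breaks both of your steps. When $y_a=0$ the coefficient $\lambda_a^{(x)}$ is not determined by $x$ (the coordinatization is not unique), so (i) cannot be obtained by applying a functional normalized at $y_a$; worse, in (ii) the passage from $\lambda_{a^{(m)}}^{n_j}\geq\varepsilon/C$ to $\lambda_{a^{(m)}}^{(x)}\geq\varepsilon/C$ uses (i) at the ancestor nodes $a^{(m)}$, which may well be null nodes, so mass can run off along a branch of null nodes without ever being detected by the weak limit $x$, and no contradiction with the non-atomicity of the chosen coordinatization of $x$ results. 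The paper devotes the last paragraph of its proof to exactly this case, perturbing each null node into a small non-zero vector supported in $I_a$ and reducing to the non-degenerate case via Lemma \ref{lem5}; your argument needs the same reduction. A smaller but related incompleteness: for $|b|<n_0$ the vectors $y_b$ are not block, and identifying $\lim_n\lambda_b^n$ with $\lambda_b^{(x)}$ requires separating the finite-dimensional span $\langle y_b:|b|<n_0\rangle$ from a tail of the block system (this is the role of the paper's $n_1$); the appeal to ``uniqueness as in Lemma \ref{lem3}'' does not cover this, since that lemma treats a fully block bush.
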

\begin{proof}
We shall first prove the lemma with the additional assumption that each $%
y_a\neq 0$. Let $x\in L,\; x =
\sum_{k=0}^\infty\sum_{|a|=k}\lambda_a^{(x)}y_a,\;\varepsilon > 0$. It will
be shown that there exists $U$, a relative weak neighbourhood of $x$\; in $L$%
, such that $\diam\{U\} < \varepsilon$, hence $x$ will be a point of
continuity.

Since $<\{y_a : |a|<n_0\}>$\; is finite dimensional, there exists $%
n_1\geqslant n_0$, such that $<\{y_a : |a|<n_0\}>\bigcap\overline{<\{y_a :
|a|\geqslant n_1\}>} = \{0\}$.

Indeed, if $\{x_1,\ldots,x_j\}$\; is a Hamel basis of
\begin{equation*}
<\{y_a : |a|<n_0\}>\bigcap\overline{<\{y_a : |a|\geqslant n_0\}>}
\end{equation*}
then $x_i = \sum_{k=n_0}^\infty\sum_{|a|=k}\mu_a^iy_a$, for $i = 1,\ldots,j$%
. Pick $a_1,\ldots,a_j\in\mathcal{A}$\; with $|a_i|\geqslant n_0$\; and $%
\mu_{a_i}^i\neq 0$\; for $i=1,\ldots,j$. Set $n_1 = \max\{|a_i| : i =
1,\ldots,j\} +1$.

If $M = \sup\{\|y_a\| : a\in\mathcal{A}\}$, there exists $n_2\geqslant n_1$,
such that $\max\{\lambda_a^{(x)}: |a| = k\} < \frac{\varepsilon}{16M}$, for
all $k\geqslant n_2$. Then $x = x_1 + x_2$, where $x_1 = \sum\limits_{k=0}^{n_2 -
1}\sum\limits_{|a|=k}\lambda_a^{(x)}y_a$, $x_2 =
\sum_{k=n_2}^\infty\sum_{|a|=k}\lambda_a^{(x)}y_a$. Define $\ell = \#\{a\in%
\mathcal{A}: |a|\leqslant n_2\}$, $\varepsilon^\prime = \frac{\varepsilon}{%
8(\ell^2M + 2M)}$.

Consider the biorthogonal functionals $(y_a^*)_{|a| = n_2}$\; defined on the
space\newline
$\overline{<\{y_a : a\in\mathcal{A}\}>}$\; with
\begin{equation*}
y_a^*(y_\gamma) = \left\{
\begin{array}{rl}
1 & \text{if } a = \gamma \\
&  \\
0 & \text{otherwise}%
\end{array}
\right.
\end{equation*}
This is possible by the fact that $n_2\geqslant n_1$ and the assumption that
$y_a\neq 0$\; for all $a\in\mathcal{A}$.

Define $U = \big\{y\in L: |y_a^*(y - x)| < \varepsilon^\prime,\; |a| = n_2%
\big\}$\; and let $y\in U$, such that $y =
\sum_{k=0}^\infty\sum_{|a|=k}\lambda_a^{(y)}y_a$. Then $y = y_1 +
y_2$\; where $y_1 = \sum_{k=0}^{n_2 -
1}\sum_{|a|=k}\lambda_a^{(y)}y_a$\; and $y_2 =
\sum_{k=n_2}^\infty\sum_{|a|=k}\lambda_a^{(y)}y_a$.

For $a\in\mathcal{A},\; |a|=n_2$,\quad$|\lambda_a^{(y)} - \lambda_a^{(x)}| =
|y_a^*(y - x)| < \varepsilon^\prime\quad\Rightarrow\quad$ $\lambda_a^{(y)} <
\varepsilon^\prime + \frac{\varepsilon}{16M}$, for all $a\in\mathcal{A},
|a|\geqslant n_2$.

For $a\in\mathcal{A},\; |a|<n_2$,\quad$|\lambda_a^{(y)} - \lambda_a^{(x)}| = %
\big|\displaystyle{\sum_{\substack{ |b|=n_2 \\ a<b}}}(\lambda_b^{(y)} -
\lambda_b^{(x)})\big| < \varepsilon^\prime\ell\quad\Rightarrow\quad$ $\|y_1
- x_1\| = \big\|\sum_{k=0}^{n_2 - 1}\sum_{|a|=k}(\lambda_a^{(y)} -
\lambda_a^{(x)})y_a\big\| \leqslant \varepsilon^\prime\ell^2M$.

Also we have
\begin{align*}
\|y_2 - x_2\| &= \Big\|\sum_{k=n_2}^\infty\sum_{|a|=k}(\lambda_a^{(y)} -
\lambda_a^{(x)})y_a\Big\| 
= \sup\{\|(\lambda_a^{(y)} - \lambda_a^{(x)})y_a\|
: |a|\geqslant n_2\} \\
&\leq  \sup\{(|\lambda_a^{(y)}| + |\lambda_a^{(x)}|)M: |a|\geqslant
n_2\} 
\\
&\leq \left(\frac{\varepsilon}{16M} + \varepsilon^\prime + \frac{%
\varepsilon}{16M}\right)M= \left(\varepsilon^\prime + \frac{\varepsilon}{8M}\right)M
\end{align*}
Then
\begin{eqnarray*}
\|y - x\| &\leqslant & \|y_1 - x_1\| + \|y_2 - x_2\| \leqslant
\varepsilon^\prime\ell^2M + \varepsilon^\prime M + \frac{\varepsilon}{8} \\
&=&\frac{\varepsilon}{8(\ell^2M + 2M)}(\ell^2M + 2M) + \frac{\varepsilon}{8}
= \frac{\varepsilon}{4}
\end{eqnarray*}
Thus $\diam\{U\} \leqslant \frac{\varepsilon}{2} < \varepsilon$.

This completes the proof for the case that each $y_a\neq 0$. For the general
case, we reduce the proof to the previous one as follows. Choose $%
(\varepsilon_n)_{n=0}^\infty$\; a sequence of positive reals with $%
\sum_{n=0}^\infty\varepsilon_n < \infty$\; and define $(y_a^\prime)_{a\in%
\mathcal{A}}$\; with the rule:
\begin{equation*}
y_a^\prime = \left\{
\begin{array}{rl}
y_a & \text{if } y_a \neq 0 \\
&  \\
y_a^{\prime\prime} & \text{otherwise}%
\end{array}
\right.
\end{equation*}
where $\supp\{y_a^{\prime\prime}\}\subset I_a$\; and $0<\|y_a^{\prime\prime}%
\|\leqslant \varepsilon_{|a|}$. We observe that $\|y_a - y_a^\prime\|
\leqslant \varepsilon_{|a|}$\; and by the previous case and Lemma \ref{lem5}%
\; the result follows.
\end{proof}
\section{The main Theorems}
This section contains the main results of the paper. Among other things we
show that the KMP is equivalent wth the RNP on the subsets of $C(\omega
^{\omega ^{k}}).$ In fact we show something stronger, namely, every
non-dentable subset of $C(\omega ^{\omega ^{k}})$ contains a convex closed
subset $L$ such that $L$\ has PCP and fails RNP.
\begin{prop}
Let $Y, Y_k, X, Z, Z_n, Z_{n,k},\;n,k\in\mathbb{N}$\; be Banach spaces such
that $Y = \sum_{k=1}^\infty\oplus Y_k,\; Z_n = \big(\sum_{k=1}^\infty\oplus
Z_{n,k}\big)_0,\; X\hookrightarrow Y$\; and $X$\; contains no copy of $%
\ell_1(\mathbb{N})$. Let $Q_n:X\rightarrow Z_n,\;n\in\mathbb{N}$\; be
bounded linear operators, $K$\; a closed, convex, bounded, non-PCP subset of
$X$\; and suppose that on $P_k(K), R_{n,k}Q_n(K)$\; the weak and norm
topologies coincide for all $n,k\in\mathbb{N}$\;(where $P_k:Y\rightarrow Y_k$%
,\;$R_{n,k}:Z_n\rightarrow Z_{n,k}$\; denote the projections).

Then there exists $L$\; closed, convex, non-dentable subset of K, such that
on $Q_n(L)$\; the weak and norm topologies coincide for all $n\in\mathbb{N}$%
. \label{prop8}
\end{prop}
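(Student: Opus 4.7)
The plan is to construct, inside $K$, a $\delta$-approximate bush $(x_a)_{a\in\mathcal{A}}$ whose nodes $y_a = x_a - x_{a^-}$ are close, with summable errors, to an eventually block family in $Y$ relative to the decomposition $(Y_k)$, and such that, for every $n\in\mathbb{N}$, the family $(Q_n y_a)$ is close, with summable errors, to an eventually block family in $Z_n$ relative to $(Z_{n,k})$. The set $L$ is then $\cco(\tilde{x}_a)_{a\in\mathcal{A}}$, the closed convex hull of the regular averaging back bush. Its non-dentability is immediate from $\|\tilde{x}_a - \tilde{x}_\beta\| > \delta/2$ for $\beta\in S_a$, while the weak--norm coincidence on each $Q_n(L)$ will be delivered by Lemmas \ref{lem4}, \ref{lem5}, \ref{lem7}.

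Since $K$ is non-PCP, fix $\delta > 0$ and a closed convex $K_0\subseteq K$ that is $\delta$-non-PCP, together with summable positive sequences $(\e_n)$, $(\delta_a)_{a\in\mathcal{A}}$ controlling the various errors. Build the bush by induction on levels. Given a current node $x_a$, the $\delta$-non-PCP property yields a weakly convergent net in $K_0$ to $x_a$ with all elements at norm distance $>\delta$ from $x_a$; Rosenthal's theorem applied in the $\ell_1$-free space $X$ permits passage to a weakly convergent sequence $(u_{a,i}) \to x_a$ in $K_0$ with $\|u_{a,i} - x_a\| > \delta$. The hypothesis that weak and norm topologies coincide on each $P_k(K)$ and each $R_{n,k}Q_n(K)$ implies $\|P_k(u_{a,i}-x_a)\|\to 0$ and $\|R_{n,k}Q_n(u_{a,i}-x_a)\|\to 0$ for every $k,n$. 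A diagonal choice of $i$, large relative to thresholds that grow with the level, makes all these norms arbitrarily small for the relevant finitely many pairs $(n,k)$. Combined with the $c_0$-structure of each $Z_n$, this forces $u_{a,i} - x_a$ to lie within prescribed summable error of a vector in $Y$ supported beyond a specified $Y$-coordinate, and each $Q_n(u_{a,i} - x_a)$ to lie within summable error of a vector in $Z_n$ supported beyond a specified $Z_n$-coordinate. By Mazur, a convex combination approximates $x_a$ within $\e_{|a|}$, and we declare its summands to be the successors $x_\beta$, $\beta\in S_a$. Letting the thresholds grow monotonically along the levels produces eventually block families $(y'_a)$ in $Y$ and $(z^n_a)$ in $Z_n$ with $\|y_a - y'_a\|,\,\|Q_n y_a - z^n_a\| < \delta_a$ for all $a$ and $n$.

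Set $L = \cco(\tilde{x}_a)_{a\in\mathcal{A}}\subseteq K_0\subseteq K$. By Lemma \ref{lem4} applied in $Y$ with the block $\delta'$-approximate bush built from $(y'_a)$, every $x\in L$ admits a non-atomic martingale coordinatization $x = \sum_k\sum_{|a|=k}\lambda_a^{(x)} y_a$ with $\lim_{k\to\infty}\max\{\lambda_a^{(x)} : |a|=k\} = 0$. For each fixed $n$, applying $Q_n$ term by term yields $Q_n x = \sum_k\sum_{|a|=k}\lambda_a^{(x)} Q_n y_a$. By Lemma \ref{lem7} applied to the eventually block family $(z^n_a)$ inside the $c_0$-sum $Z_n$, on the set of all such sums with $(z^n_a)$ in place of $Q_n y_a$ the weak and norm topologies coincide. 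Since $\|Q_n y_a - z^n_a\| < \delta_a$ is summable, Lemma \ref{lem5} transfers this coincidence to $Q_n(L)$, completing the argument.

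The principal technical obstacle is the simultaneous block-like approximation at each inductive step: one must preserve the $\delta$-approximate bush relation while forcing approximate block structure in $Y$ \emph{and} in every $Z_n$ via $Q_n$, within error budgets that sum to a finite total over the infinite tree. The diagonal construction succeeds precisely because the hypotheses give norm convergence of $P_k$ and of $R_{n,k}Q_n$ on weakly convergent sequences in $K$, so that at each level only finitely many coordinates and finitely many operators must be accommodated; the summability of $(\delta_a)$ and $(\e_n)$ then absorbs the accumulated perturbations and the no-$\ell_1$-copy hypothesis provides the sequential extraction underlying the whole scheme.
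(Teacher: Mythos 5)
Your proposal follows essentially the same route as the paper: the same inductive construction of a $\delta$-approximate bush in a $\delta$-non-PCP subset of $K$, using the Bourgain--Fremlin--Talagrand/Rosenthal passage from nets to sequences (no copy of $\ell_1$) together with the weak-to-norm continuity of the $P_k$ and $R_{n,k}Q_n$ on $K$ to force simultaneous approximate block structure in $Y$ and in every $Z_n$ with summable errors, then taking $L=\cco(\tilde{x}_a)$ and invoking Lemmas \ref{lem4}, \ref{lem7} and \ref{lem5} exactly as the paper does. The argument is correct; only minor bookkeeping (e.g.\ starting from a $2\delta$-non-PCP subset so that the separation $\|x_\beta-x_a\|>\delta$ is actually available) differs from the published proof.
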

\begin{proof}
Since $K$ is non-PCP there exists $W\;2\delta$-non-PCP subset of $K$, for
some $\delta>0$. We will inductively construct:
\begin{itemize}
\item[(a)] a $\delta$-bush $(x_a)_{a\in\mathcal{A}}\subset W$
\item[(b)] a family $(I_a)_{a\in\mathcal{A}}$ of disjoint intervals of $%
\mathbb{N}$, such that if $a<_{\text{lex}}b$, then $I_a < I_b$
\end{itemize}
Such that:
\begin{enumerate}
\item[(1)] if $x_a^\prime = \sum_{\gamma\leqslant a}P_{I_\gamma}(y_\gamma)$,
then $(x_a^\prime)_{a\in\mathcal{A}}$\; is a block $\frac{\delta}{2}$%
-approximate bush in $Y$\; and $\sum_{a\in\mathcal{A}}\|y_a - y_a^\prime\| <
\infty$.
\item[(2)] if $R_{n,I_a}Q_n(y_a) = y_a^n$, then $(y_a^n)_{a\in\mathcal{A}}$
is eventually block and 
\begin{center}$\sum_{a\in\mathcal{A}}\|Q_n(y_a) - y_a^n\| < \infty$%
\; for all $n\in\mathbb{N}$.
\end{center}
\end{enumerate}
By taking this construction for granted, it will now be shown that by
setting $L=\cco(\tilde{x}_a)_{a\in\mathcal{A}}$, the desired result is
achieved.

By (1) and Lemma \ref{lem4}, $L$\; satisfies the non-atomic martingale
coordinatization property. Consider the set
\begin{align*}
L_n^{\prime}=\Big\{x\in Z_n&: x = \sum_{k=0}^\infty\sum_{|a|=
k}\lambda_a^{(x)}y_a^n,\;\lambda_\varnothing^{(x)} = 1,\;
\lambda_a^{(x)}\geqslant 0,\; \lambda_a^{(x)} = \sum_{b\in
S_a}\lambda_b^{(x)}, \\
&\text{for all}\;a\in\mathcal{A},\;\lim_{k\to\infty}\max\{\lambda_a^{(x)} :
|a| = k\} = 0\Big\}
\end{align*}
Then by (2) and Lemma \ref{lem4}, on $L_n^\prime$\; the weak and norm
topologies coincide. Now define
\begin{align*}
L_n=\Big\{x\in Z_n&:x=\sum_{k=0}^\infty\sum_{|a|=
k}\lambda_a^{(x)}Q_n(y_a),\,\lambda_\varnothing^{(x)} = 1,\,
\lambda_a^{(x)}\geqslant 0,\, \lambda_a^{(x)} = \sum_{b\in
S_a}\lambda_b^{(x)}, \\
&\text{for all}\;a\in\mathcal{A},\;\lim_{k\to\infty}\max\{\lambda_a^{(x)} :
|a| = k\} = 0\Big\}
\end{align*}
By (2) and Lemma \ref{lem5}, on $L_n$\; the weak and norm topologies
coincide. But $L$ has the non-atomic martingale coordinatization property,
thus $Q_n(L)\subset L_n$, hence on $Q_n(L)$\; the weak and norm topologies
coincide, for all $n\in\mathbb{N}$.

In order to complete the proof, we shall now proceed to the previously
mentioned construction.

An important ingredient is the following fact:

If $X$ contains no copy of $\ell_{1}$, $K$  a bounded subset of $X$ and $x\in\overline{K}^{w}$, then there exists a sequence $(x_n)_{n\in\mathbb{N}}$, such that $w-\lim_{n\to\infty}x_n=x$ (see \cite{11},\cite{20}).

Choose $x_\varnothing = x \in W$. Since $X$\; contains no copy of $\ell_1(%
\mathbb{N})$\; and $W$\; is $2\delta$-non-PCP, there exists a sequence $%
(x_m)_{m\in\mathbb{N}}\subset W$, such that $x_m\overset{w}{\rightarrow}x$\;
and $\|x_m - x\| > \delta$, for all $m\in\mathbb{N}$.

For $\varepsilon_0>0$, there exists $k_0\in\mathbb{N}$, such that:
\begin{equation*}
\big\|P_{[1,k_0]}(x) - x\big\| < \varepsilon_0.\qquad\text{Define}\quad
I_\varnothing = [1,k_0]
\end{equation*}
For $\varepsilon_1>0$, since $\big\|P_k(x_m - x)\big\|,\big\|%
R_{1,k}Q_1(x_m-x)\big\|\xrightarrow{m\to\infty}0$\; for all $k\in\mathbb{N}$%
, there exists $m_1\in\mathbb{N}$\; such that $\big\|P_{[1,k_0]}(x_{m_1} - x)%
\big\|,\big\|R_{1,[1,k_0]}Q_1(x_{m_1}-x)\big\| < \frac{\varepsilon_1}{2\cdot
2}$.

There exist $k_1 > k_0$\; such that 
\begin{center}
$\big\|P_{[1,k_1]}(x_{m_1} - x) -
(x_{m_1} - x)\big\|,\big\|R_{1,[1,k_1]}Q_1(x_{m_1}-x) - Q_1(x_{m_1}-x)\big\| %
< \frac{\varepsilon_1}{2\cdot 2}$.
\end{center} Then
\begin{equation*}
\big\|P_{(k_0,k_1]}(x_{m_1} - x) - (x_{m_1} - x)\big\|,\big\|%
R_{1,(k_0,k_1]}Q_1(x_{m_1}-x) - Q_1(x_{m_1}-x)\big\| < \frac{\varepsilon_1}{2%
}
\end{equation*}
Define $I_1^1 = (k_0, k_1]$.

Inductively choose $(x_{m_i})_{i\in\mathbb{N}},\;(I_i^1)_{i\in\mathbb{N}}$\;
a subsequence of $(x_m)_{m\in\mathbb{N}}$\; and successive intervals of $%
\mathbb{N}$, such that:
\begin{equation*}
\big\|P_{I_i^1}(x_{m_i} - x) - (x_{m_i} - x)\big\|,\big\|%
R_{1,I_i^1}Q_1(x_{m_i}-x) - Q_1(x_{m_i}-x)\big\| < \frac{\varepsilon_1}{2^i}
\end{equation*}
For $\delta_0>0$, by Mazur's theorem, there exists a finite set $(x_b)_{b\in
S_\varnothing}\subset (x_{m_i})_{i\in\mathbb{N}}$\; and positive reals $%
(\lambda_b)_{b\in S_\varnothing}$\; with $\sum_{b\in S_\varnothing}\lambda_b
= 1$, such that 
\begin{center}
$\big\|x_\varnothing - \sum_{b\in S_\varnothing}\lambda_b x_b%
\big\| < \delta_0$.
\end{center} Define $(I_b)_{b\in S_a}$\; the corresponding intervals.
Then we have
\begin{equation*}
\sum_{b\in S_\varnothing}\big\|P_{I_b}(y_b) - y_b\big\|,\sum_{b\in
S_\varnothing}\big\|R_{1,I_b}Q_1(y_b) - Q_1(y_b)\big\| < \sum_{i=1}^\infty%
\frac{\varepsilon_1}{2^i} = \varepsilon_1
\end{equation*}
Suppose that $(x_a)_{|a|\leqslant j},\; (I_a)_{|a|\leqslant j}$\; have been
chosen such that, if $|a|,|b|\leqslant j,\; a<_{\text{lex}}b$, then $I_a <
I_b$, $\big\|x_a - \sum_{b\in S_a}\lambda_b x_b\big\| < \delta_{|a|}$,\; $%
\|x_a - x_b\| > \delta$, for $|a| < j$,\;$b\in S_a$, and also
\begin{equation*}
\sum_{|a|=i}\big\|P_{I_a}(y_a) - y_a\big\|,\sum_{|a|=i}\big\|%
R_{\ell,I_a}Q_\ell(y_a) - Q_\ell(y_a)\big\| < \varepsilon_i,\quad\text{for}%
\;1 \leqslant\ell\leqslant i\leqslant j
\end{equation*}
Enumerate the set $\{a: |a|=j\}$\; in lexicographic order and for $a_1$, if $%
N = \#\{a: |a|=j\}$, for $\varepsilon_{j+1}, \delta_j$, as before choose $%
(x_b)_{b\in S_{a_1}}$,\;$(I_b)_{b\in S_{a_1}}$, such that $%
(I_b)_{|b|\leqslant j} < (I_b)_{b\in S_{a_1}}$,\;$\|x_{a_1} - x_b\|>\delta$%
,\;$\|x_{a_1} - \sum_{b\in S_{a_1}}\lambda_b x_b\|<\delta_j$\; and
\begin{equation*}
\sum_{b\in S_{a_1}}\big\|P_{I_b}(y_b) - y_b\big\|,\sum_{b\in S_{a_1}}\big\|%
R_{\ell,I_b}Q_\ell(y_b) - Q_\ell(y_b)\big\| < \frac{\varepsilon_{j+1}}{N}%
,\quad\text{for}\;1\leqslant\ell\leqslant j+1
\end{equation*}
Continue in the same manner for the rest of the set $\{a: |a|=j\}$. Then we
have
\begin{equation*}
\sum_{|a|=j+1}\big\|P_{I_a}(y_a) - y_a\big\|,\sum_{|a|=j+1}\big\|%
R_{\ell,I_a}Q_\ell(y_a) - Q_\ell(y_a)\big\| < \varepsilon_{j+1},\quad\text{%
for}\;1\leqslant\ell\leqslant j+1
\end{equation*}

The inductive construction is complete. If the sequences $%
(\varepsilon_j)_{j=0}^\infty$,\;$(\delta_j)_{j=0}^\infty$\; have been
suitably chosen, then the conclusion of the theorem holds. In fact they need
to be chosen in such a way that $\sum_{j=0}^\infty\varepsilon_j,\sum_{j=0}^%
\infty\delta_j < \frac{\delta}{16}$.

Then it easy to see that:
\begin{itemize}
\item[(i)] $(x_a)_{a\in\mathcal{A}}$\; is a $\delta$-approximate bush

\item[(ii)] $(x_a^\prime)_{a\in\mathcal{A}}$\; is a block $\frac{\delta}{2}$%
-approximate bush and $\sum_{a\in\mathcal{A}}\|y_a - y_a^\prime\|<\infty$.
\item[(iii)] $(y_a^n)_{|a|\geqslant n}$\; is block and $\sum_{a\in\mathcal{A}%
}\big\|y_a^n - Q_n(y_a)\big\|<\infty$, for all $n\in\mathbb{N}$.
\end{itemize}
\end{proof}
\begin{rem}
The proof of Proposition 3.1 shows that if $X$
contains no copy of $l_{1}$ and $X$ fails the PCP then there exists a $%
\delta -$approximate bush $(x_{a})_{a\in \mathcal{A}}$ whose nodes form a
basic sequence. Therefore $X$ contais a subspace with a basis that fails the
RNP. This result is known to the experts but we were unable to trace a
reference. In \cite{3} a Banach space $X$ is contructed so that $X^{\ast }$ is
separable and the PCP is equivalent with the RNP on the subsets of $X.$ It
follows that if a subspace $Y$ of $X$ fails RNP then $Y$ contains a space $Z$
with a basis that fails the RNP.
\end{rem}
\begin{thm}
Let $X$ be a separable Banach space that contains no copy of $\ell_1(\mathbb{%
N})$\; and $Q_n:X\rightarrow C(\omega^{\omega^k}),\;n\in\mathbb{N}$\; be
bounded linear operators. Suppose $K$\; is a closed, convex, bounded,
non-PCP subset of $X$, such that the PCP and RNP are equivalent on the
subsets of $K$. Then there exists $L$\; closed, convex, bounded,
non-dentable subset of $K$, such that on $Q_n(L)$\; the weak and norm
topologies coincide for all $n\in\mathbb{N}$. \label{thm9}
\end{thm}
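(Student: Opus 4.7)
I would proceed by induction on $k$, using Proposition~3.1 as the engine. The key reduction at each step is the canonical decomposition
\[
C(\omega^{\omega^k}) \;=\; \Big(\sum_{m=0}^\infty \oplus\, C(\omega^{\omega^{k-1}\cdot m})\Big)_0
\]
recorded in the Preliminaries, combined with the Bessaga--Pelczynski isomorphism $C(\omega^{\omega^{k-1}\cdot m})\cong C(\omega^{\omega^{k-1}})$; these drop the exponent by one so the induction hypothesis can be fed. On the domain side, I fix once and for all an embedding $X\hookrightarrow Y=C[0,1]$ and use the Schauder basis of $C[0,1]$ to view $Y$ as a Schauder decomposition $\sum_{k=1}^\infty\oplus Y_k$ with one-dimensional summands $Y_k$; then the hypothesis ``weak $=$ norm on $P_k(K)$'' of Proposition~3.1 holds automatically for any subset of $X$.

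For the base case $k=0$, identify $C(\omega^{\omega^0})=C(\omega)\cong c_0$ with the canonical decomposition $c_0=(\sum\oplus\mathbb{R})_0$, so each $Z_{n,m}=\mathbb{R}$ and ``weak $=$ norm on $R_{n,m}Q_n(K)$'' is also automatic; Proposition~3.1 applies directly to the given $K$ and the $Q_n$, producing the required $L$. For the inductive step, I would list the double family $\{R_{n,m}Q_n : n,m\in\mathbb{N}\}$ as a single countable family of operators $X\to C(\omega^{\omega^{k-1}})$ and apply the induction hypothesis to $K$ with this family, obtaining a closed, convex, bounded, non-dentable $L_1\subset K$ on which weak $=$ norm holds for each $R_{n,m}Q_n(L_1)$. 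Since the hypothesis ``PCP $=$ RNP on subsets'' is inherited from $K$ by $L_1$, the non-dentable $L_1$ is also non-PCP, so I can feed $L_1$ into Proposition~3.1 with the original $Q_n$, the $Y$-decomposition from the $C[0,1]$ embedding, and the $Z_n$-decomposition above: both weak $=$ norm hypotheses of Proposition~3.1 are met, and the proposition returns a closed, convex, bounded, non-dentable $L\subset L_1\subset K$ with weak $=$ norm on each $Q_n(L)$, closing the induction.

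The hard part will be the transition from the non-dentable $L_1$ supplied by the induction hypothesis to a non-PCP set that Proposition~3.1 can consume; this is the sole point at which the hypothesis ``PCP and RNP are equivalent on the subsets of $K$'' is needed, and it propagates to $L_1$ automatically because every subset of $L_1$ is a subset of $K$. A secondary bookkeeping step is to check that the identification $C(\omega^{\omega^{k-1}\cdot m})\cong C(\omega^{\omega^{k-1}})$ and the countable reindexing of the operators respect the ``weak $=$ norm'' condition, so that the induction hypothesis legitimately applies to the reindexed family; this is routine because ``weak $=$ norm'' is preserved by linear isomorphism.
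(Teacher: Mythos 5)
Your proposal is correct and follows essentially the same route as the paper: induction on $k$ with Proposition~3.1 as the engine, the base case $C(\omega)\cong c_0$ handled by the finite-dimensionality of the summands, and the inductive step obtained by applying the induction hypothesis to the countable family $\{R_{n,m}Q_n\}$ to produce a non-dentable $L_1\subset K$, which is non-PCP precisely because PCP and RNP are assumed equivalent on subsets of $K$, and then feeding $L_1$ back into Proposition~3.1. The only cosmetic difference is that the paper phrases the decomposition as $C(\omega^{\omega^{m+1}})=\bigl(\sum_{k}\oplus(C(\omega^{\omega^{m}}),\|\cdot\|_k)\bigr)_0$ with equivalent renormings rather than invoking Bessaga--Pelczynski explicitly, which is the same bookkeeping point you flag at the end.
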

\begin{proof}
We prove the theorem by using induction. For $k=0$\; we have that $%
C(\omega^{\omega^0}) = C(\omega)\cong c_0(\mathbb{N})$.

In Proposition \ref{prop8}, consider $Y = C[0,1]$,\;$Y_k = <e_k>$,\;$Z_n =
c_0(\mathbb{N})$,\;$Z_{n,k} = \mathbb{R}$, where $(e_k)_{k\in\mathbb{N}}$\;
is a Schauder basis of $C[0,1]$. Since $Y_k, Z_{n,k}$\; are finite
dimensional, the requirements of Proposition \ref{prop8}\; are fulfilled,
thus there exists the desired set $L$.

Suppose that it is true for $k=m\geqslant 0$, it will be shown that it is
true for $k=m+1$.

It is well known that $C(\omega^{\omega^{m+1}}) = \Big(\sum_{k=1}^\infty%
\oplus\big(C(\omega^{\omega^m}),\|\cdot\|_k\big)\Big)_0$, where $\|\cdot\|_k$%
\; is an equivalent norm on $C(\omega^{\omega^m})$. Then the family $%
R_{n,k}Q_n:X\rightarrow C(\omega^{\omega^m})$\; is countable and by the
inductive assumption, there exists a closed, convex, non-dentable subset $%
L^\prime$\; of $K$, such that on $R_{n,k}Q_n(L^\prime)$\; the weak and norm
topologies coincide. Since the PCP and RNP are equivalent on the subsets of $%
K$,\;$L^\prime$\; is non PCP. Applying once more Proposition \ref{prop8}\;
for the set $L^\prime$\; and the family of operators $(Q_n)_{n\in\mathbb{N}}$%
, we conclude that there exists a closed, convex, bounded, non-dentable
subset $L$\; of $L^\prime$, such that on $Q_n(L)$\; the weak and norm
topologies coincide, for all $n\in\mathbb{N}$. The proof is complete.
\end{proof}
\begin{thm}
Let $K$\; be a closed, convex, bounded, non-dentable subset of $%
C(\omega^{\omega^k})$. Then there exists a convex, closed subset $L$\; of $K$%
, such that $L$\; has the PCP and fails the RNP. Therefore the KMP and RNP
are equivalent on the subsets of $C(\omega^{\omega^k})$. \label{thm10}
\end{thm}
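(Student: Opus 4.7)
The plan is to reduce Theorem~3.3 to Theorem~3.2 (the preceding \textbf{\ref{thm9}}) and then invoke Schachermayer's theorem (quoted in the introduction) to obtain the KMP/RNP equivalence.

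First, I would establish the principal assertion: every closed, convex, bounded, non-dentable $K\subset C(\omega^{\omega^k})$ contains a closed convex $L$ with PCP that fails the RNP. Assume, towards a contradiction, that no such $L$ exists; equivalently, on every closed, convex subset of $K$ the PCP is equivalent to the RNP. Since $K$ itself is non-dentable, it fails the RNP, and hence (by our assumption) it also fails the PCP. Take $X:=C(\omega^{\omega^k})$. It is separable, and since $X^{\ast}$ is separable (as $X=C(\alpha)$ for a countable compact metric $\alpha$), $X$ contains no isomorphic copy of $\ell_{1}(\mathbb{N})$. Consider the single bounded operator $Q_{1}:=\mathrm{Id}_{X}:X\to C(\omega^{\omega^k})$ (and $Q_{n}=0$ for $n\geq 2$). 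Theorem~\ref{thm9} then applies and produces $L\subseteq K$ closed, convex, bounded, and non-dentable such that on $Q_{1}(L)=L$ the weak and norm topologies coincide. But weak--norm coincidence on $L$ makes every point of $L$ a point of weak-to-norm continuity, so $L$ has the PCP; since $L$ is non-dentable it fails the RNP, contradicting the assumption that the PCP and the RNP agree on subsets of $K$.

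Second, I would deduce the KMP/RNP equivalence. The implication RNP $\Rightarrow$ KMP is classical. For the converse, suppose some $K\subset C(\omega^{\omega^k})$ has the KMP; I must show it has the RNP. If not, $K$ is non-dentable, and by the first part there is a closed convex $L\subseteq K$ having the PCP but failing the RNP. By the preliminaries, every PCP set is strongly regular, so $L$ is strongly regular and non--RN. The Schachermayer theorem stated in the introduction then yields a closed, bounded, convex subset $C\subseteq L\subseteq K$ with no extreme point, directly contradicting the KMP of $K$.

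The main obstacle is essentially bookkeeping rather than a new idea: one must verify that Theorem~\ref{thm9} applies with a single identity operator (which is immediate, since the hypothesis on the $Q_{n}$'s becomes trivial) and confirm that $C(\omega^{\omega^k})$ is indeed $\ell_{1}$-free. Both points are standard. Once these are in place, the argument is almost entirely formal: Theorem~\ref{thm9} supplies the non-dentable subset with weak=norm, and Schachermayer's theorem converts ``PCP + non-RN'' into ``failure of KMP'', closing the loop. No explicit bush construction is required in this proof, as the delicate tree/averaging machinery has already been encapsulated in Theorem~\ref{thm9} via Proposition~\ref{prop8} and Lemmas~\ref{lem3}--\ref{lem7}.
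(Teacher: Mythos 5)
Your proposal is correct and follows essentially the same route as the paper: a contradiction argument that feeds the identity operator into Theorem 3.2 (after noting that $C(\omega^{\omega^k})$ contains no copy of $\ell_1$ and that, under the contradiction hypothesis, $K$ is non-PCP), with the KMP/RNP equivalence then extracted via Schachermayer's theorem exactly as the paper intends, though it leaves that last step implicit. The only nitpick is your phrase ``if not, $K$ is non-dentable'': a set failing the RNP need not itself be non-dentable, but only contains a non-dentable closed convex bounded subset, to which the first part of your argument should be applied.
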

\begin{proof}
Towards a contradiction, suppose that $K$ is a closed, convex, bounded
non-dentable subset of $C(\omega ^{\omega ^{k}})$, such that the PCP and RNP
are equivalent on the subsets of $K$. We apply Theorem \ref{thm9}\  for $%
Q=I:C(\omega ^{\omega ^{k}})\rightarrow C(\omega ^{\omega ^{k}})$, the
identity map. Then there exists $L$\  closed, convex, bounded, non-dentable
subset of $K$, such that on $I(L)=L$\  the weak and norm topologies
coincide. But this means that the PCP and RNP are not equivalent on the
subsets of $K$, a contradiction completing the proof.
\end{proof}
\textbf{Problem}. The problem of the equivalence of the Radon Nikodym
Property and the Krein Milman Property, remains open on the subsets of the
spaces $C(\omega ^{\omega ^{a}})$, for ordinals $a\geq \omega $.
\subsection*{Acknowledgements}
We want to express our gratitude to Prof. S.
Argyros for many important discussions and invaluable help.
We also thank the referee for suggestions that greatly simplified the proofs of the theorems.

\end{document}